\documentclass[11pt,letterpaper]{article}
\usepackage[utf8]{inputenc}
\usepackage{tikz}
\usepackage{mathtools}
\usepackage{tikz-cd}
\usepackage{bm}
\usepackage{mathrsfs}
\usepackage{cite}
\usepackage[small,nohug,heads=LaTeX]{diagrams}
\usepackage{amsmath}
\numberwithin{equation}{section}
\usepackage{hyperref}
\usepackage{url}
\usepackage{minitoc}
\usepackage{lineno}
\usepackage{mathrsfs}
\usepackage{amsfonts}
\usepackage{amsthm}
\usepackage{amssymb}
\theoremstyle{definition}
\newtheorem{define}{Definition}[section]
\newtheorem{example}[define]{Example}
\newtheorem{construction}{Construction}[section]
\theoremstyle{remark}
\newtheorem{remark}[define]{Remark}
\theoremstyle{plain}
\newtheorem{theo}[define]{Theorem}
\newtheorem*{theorem*}{Theorem}
\newtheorem{lemma}[define]{Lemma}
\newtheorem{prop}[define]{Proposition}
\newtheorem{cor}[define]{Corollary}
\newcommand{\X}{\mathscr X}
\newcommand{\C}{\mathscr C}
\newcommand{\E}{\mathscr E}
\newcommand{\G}{\mathscr G}
\newcommand{\A}{\mathscr A}

\newcommand{\D}{\mathscr D}

\newcommand{\U}{\mathscr U}
\newcommand{\V}{\mathscr V}
\newcommand{\W}{\mathscr W}

\newcommand{\dom}{\mathrm{dom}}
\newcommand{\cod}{\mathrm{cod}}
\newcommand{\stab}{\mathbf{Stab}}

\usepackage{amssymb}
\usepackage[left=2cm,right=2cm,top=2cm,bottom=2cm]{geometry}

\begin{document}

\title{A 2-Site for Continuous 2-Group Actions}

\author{Michael Lambert}

\maketitle

\begin{abstract}  Elmendorf's Theorem states that the category of continuous actions of a topological group is a Grothendieck topos in the sense that it is equivalent to a category of sheaves on a site.  This paper offers a 2-dimensional generalization by showing that a certain 2-category of continuous actions of a topological 2-group is 2-equivalent to a 2-category of 2-sheaves on a suitable 2-site.
\end{abstract}

\tableofcontents

\section{Introduction}

The main result of this paper is a 2-dimensional version of A. D. Elmendorf's theorem in \cite{Elmendorf} showing that the category of continuous actions of a topological group on discrete spaces is a Grothendieck topos.  The present purpose is to show that a certain 2-category of continuous actions of a topological 2-group on discrete 2-spaces is a ``Grothendieck 2-topos" essentially as in \cite{StreetSheafThy}.  Only strict versions of the definitions of 2-group, action, and stabilizer are considered here.  A follow-up paper will study the possibility of giving an analogous ``bicategorical" result for the more general notions of ``coherent" or perhaps ``weak" 2-groups as in \cite{Baez2Groups}.

The version of Elmendorf's Theorem followed here is the account in \S III.9 of \cite{MM}.  More precisely, this shows that the category $\mathbf BG$ of continuous actions of a topological group $G$ on discrete spaces with equivariant maps between them is equivalent to a category of sheaves on a Grothendieck site.  The underlying category of the site is basically the so-called ``orbit category" associated to $G$, consisting of certain coset spaces taken over the open subgroups of $G$, together with the ``atomic topology" in which covering sieves are generated by the singletons.  The generalization in this paper is the following, which appears as Theorem \ref{MAIN THEOREM}.

\begin{theorem*} For any topological 2-group $\G$, there is a 2-category $\mathfrak S(\G)$ as yielding a 2-equivalence
\[ \mathfrak{Sh}(\mathfrak S(\G),J_{at})\simeq \mathfrak B\G
\]
between the 2-category $\mathfrak{Sh}(\mathfrak S(\G),J_{at})$ of 2-sheaves on $\mathfrak S(\G)$ for the atomic topology and the 2-category $\mathfrak B\G$ of continuous actions of $\G$ on discrete 2-spaces.
\end{theorem*}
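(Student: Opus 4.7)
The plan is to imitate the proof of classical Elmendorf's theorem presented in \S III.9 of \cite{MM}, systematically lifting each step to the 2-categorical setting and replacing group-theoretic constructions by their 2-group analogues. The overall strategy is to construct a 2-functor
\[ \Phi \colon \mathfrak B\G \longrightarrow \mathfrak{Sh}(\mathfrak S(\G), J_{at}) \]
sending a continuous $\G$-action $\X$ to the 2-presheaf of equivariant morphisms out of the representable orbits, and then to exhibit a pseudoinverse.

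First I would define $\Phi$ on objects by
\[ \Phi(\X)(\G/\U) := \mathrm{Hom}_\G(\G/\U, \X), \]
which by a 2-categorical Yoneda computation should be equivalent to the ``fixed 2-subspace" of $\X$ under the sub-2-group $\U$. Functoriality in $\G/\U$ and in $\X$ is formal. The substantive step here is to verify that $\Phi(\X)$ is a 2-sheaf for the atomic topology $J_{at}$: since $J_{at}$ is generated by singleton covers, this reduces to a descent condition along single equivariant surjections $\G/\V \twoheadrightarrow \G/\U$ together with a coherence condition on 2-cells. Both should follow from the orbit decomposition of a $\G$-2-space, which in turn rests on continuity of the action.

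Next I would construct a pseudoinverse $\Psi$ by the standard device of evaluating a 2-sheaf on the ``universal" representable and endowing it with the residual $\G$-action transported through the representables; equivalently, $\Psi(F)$ can be assembled as a 2-colimit of representables indexed by the 2-category of elements of $F$. That $\Phi$ and $\Psi$ are pseudoinverses then reduces to (i) the orbit decomposition of a continuous $\G$-2-action into representables $\G/\stab(x,\alpha)$, and (ii) the density of representables in $\mathfrak{Sh}(\mathfrak S(\G), J_{at})$ with respect to 2-colimits.

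The principal obstacle is the bookkeeping of 2-cell data. Unlike the 1-dimensional case, the stabilizer of a point $x$ carries nontrivial 2-morphism data coming from $\G$-automorphisms that fix $x$ only up to a specified 2-cell, so $\stab(x,\alpha)$ is genuinely a sub-2-group rather than a mere subgroup. Ensuring that $\Phi$ is fully faithful on 2-cells thus requires showing that equivariant modifications between equivariant 1-cells correspond bijectively to modifications of 2-presheaves, which in turn depends on the atomic 2-topology detecting this data correctly. For the 2-descent verification underlying sheafiness I would invoke the framework of \cite{StreetSheafThy} rather than reprove 2-dimensional descent from scratch.
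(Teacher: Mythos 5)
Your outline matches the paper's in its first half: $\Phi(\X) = \mathfrak B\G(-,\X)$ is exactly the paper's functor, and the identification of $\Phi(\X)(\G/\U)$ with the fixed-point category $\X^{\U}$ is Proposition \ref{fixed point iso prop}. But the second half rests on two steps that do not go through as stated. First, you build $\Psi$ as a 2-colimit of representables over the 2-category of elements and then lean on ``the orbit decomposition of a continuous $\G$-2-action into representables $\G/\stab(x,\alpha)$.'' In dimension 1 a continuous $G$-set really is a coproduct of orbits, but a $\G$-category is not a coproduct of orbit categories: morphisms of $\X$ connect objects lying in different object-orbits, so no such decomposition into the representables $\G/\U$ is available. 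The paper avoids this entirely by taking $\Psi(F)$ to be the \emph{filtered} colimit $\lim_{\to}F(\G/\U)$ over the poset $\mathbf L(\G)$ of open sub-2-groups, and by replacing the orbit decomposition with the isomorphism $\X\cong\lim_{\to}\X^{\U}$ (Proposition \ref{counit iso}); the germ-like description of that filtered colimit is what makes the continuous $\G$-action on $\Psi(F)$ constructible and the unit computable. You would need to supply an analogous concrete handle on your colimit of representables, and none is offered.

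Second, you dispose of $\Phi\Psi\cong 1$ by appealing to ``the density of representables in $\mathfrak{Sh}(\mathfrak S(\G),J_{at})$.'' That density statement is essentially equivalent to the theorem being proved, so invoking it is circular; the actual work is showing that the unit $\eta\colon F\to\Phi\Psi(F)$ is an isomorphism of categories when $F$ is a 2-sheaf. In the paper this occupies Lemmas \ref{atomic top lemma}, \ref{surjective when sheaf} and \ref{full when sheaf}: one must check that an element of the colimit fixed by $\U$ determines \emph{well-defined} compatible data on the singleton-generated sieve (using that the transition functors $FI$ are injective-on-objects and faithful for an atomic 2-sheaf), and this is also where the definition of sieve must drop Street's fullness requirement for the argument to work at the 2-cell level. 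Your proposal does not engage with this well-definition problem at all, and it is the genuinely 2-dimensional content of the proof.
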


The important contents of the paper are summarized as follows.  In \S 2.2 there is on offer a definition of a strict continuous action of a strict topological 2-group  $\G$ on a (discrete) 2-space.  Also proposed is a definition of an open sub-2-group of $\G$ that will fit with the rest of the account.  In \S 4.2 the 2-site $\mathfrak S(\G)$ is constructed as a sort of orbit 2-category of $\G$.  This can be provided with the atomic topology on its underlying 1-category.  In \S 4.4 the 2-category of 2-sheaves on this 2-site is seen in the main result of the paper -- namely, Theorem \ref{MAIN THEOREM} -- to be 2-equivalent to the 2-category of continuous $\G$-actions.

The rest of this introduction is concerned with giving some background on 2-categories and bicategories needed throughout the paper.  The goal is to be fairly explicit and self-contained so as to serve the backgrounds and interests of a diverse group of potential readers.

\subsection{2-Categories and Bicategories}

Throughout $\mathbf{Cat}$ denotes the ordinary 1-category of small categories and functors between them.  The basic viewpoint is that a 2-category is a $\mathbf{Cat}$-category, that is, a category enriched in $\mathbf{Cat}$ in the sense of \cite{Enriched}.  The reference \cite{KS} gives a more elementary description of what this means.  Roughly, a 2-category $\mathfrak A$ is a category with further 2-cells $\theta\colon f\Rightarrow g$ between arrows $f,g\colon A\rightrightarrows B$, two operations of ``vertical" and ``horizontal" composition of 2-cells, and finally suitable identity 2-cells for the compositions.  Horizontal composition is denoted with `$\ast$' whereas all other compositions are denoted by juxtaposition.  The 2-data satisfies a number of expected axioms, including a certain interchange law relating horizontal and vertical composition of 2-cells.  The basic example is $\mathfrak{Cat}$, the 2-category of small categories, functors between them, and natural transformations.  Horizontal composition of natural transformations
$$\begin{tikzpicture}
\node(1){$\C$};
\node(2)[node distance=.5in, right of=1]{$\Downarrow \alpha$};
\node(3)[node distance=.5in, right of=2]{$\D$};
\node(4)[node distance=.5in, right of=3]{$\Downarrow \beta$};
\node(5)[node distance=.5in, right of=4]{$\E$};
\draw[->, bend left=70](1) to node [above]{$F$}(3);
\draw[->, bend right=70](1) to node [below]{$G$}(3);
\draw[->, bend left=70](3) to node [above]{$H$}(5);
\draw[->, bend right=70](3) to node [below]{$K$}(5);
\end{tikzpicture}$$
is given on components $C\in \C_0$ by the formula
\begin{equation} \label{horiz comp in cat} (\beta\ast\alpha)_C:= \beta_{GC}H(\alpha_C) = K(\alpha_C)\beta_{FC}.
\end{equation}

Denote the sets of objects, arrows and 2-cells of a small 2-category by $\mathfrak A_0$, $\mathfrak A_1$ and $\mathfrak A_2$, respectively.  Every 2-category $\mathfrak A$ has an underlying 1-category $|\mathfrak A|$ consisting only of the objects and arrows.  Every 1-category can be viewed as a ``locally discrete" 2-category with only identity 2-cells between any two morphisms with the same domain and the same codomain.  For any 2-category $\mathfrak A$, the 1-dimensional opposite $\mathfrak A^{op}$ is obtained by formally reversing the morphisms, but not the 2-cells.

A 2-functor $F\colon \mathfrak A\to\mathfrak B$ is a morphism of 2-categories $\mathfrak A$ and $\mathfrak B$.  Roughly, it is a functor of underyling 1-categories $|F|\colon |\mathfrak A|\to |\mathfrak B|$ together with an assignment on 2-cells respecting the two compositions and their identities.  A 2-natural transformation $\theta\colon F\Rightarrow G$ between 2-functors $F,G\colon \mathfrak A\rightrightarrows \mathfrak B$ assigns to each $A\in \mathfrak A_0$ an arrow $\theta_A\colon FA\to GA$ such that for each $f\colon A\to A'$ the usual naturality square
$$\begin{tikzpicture}
\node(1){$FA$};
\node(2)[node distance=1in, right of=1]{$GA$};
\node(3)[node distance=.7in, below of=1]{$FA'$};
\node(4)[node distance=.7in, below of=2]{$GA'$};
\node(5)[node distance=.5in, right of=1]{$$};
\node(6)[node distance=.35in, below of=5]{$=$};
\draw[->](1) to node [above]{$\theta_A$}(2);
\draw[->](1) to node [left]{$Ff$}(3);
\draw[->](2) to node [right]{$Gf$}(4);
\draw[->](3) to node [below]{$\theta_{A'}$}(4);
\end{tikzpicture}$$
commutes and the following compatibility condition is satisfied:
\begin{enumerate}
\item[]{$\textbf{2-Natural Compatibility.}$} \label{2-nat compat} There is an equality in $\mathfrak B$ of composite 2-cells
$$\begin{tikzpicture}
\node(1){$FA$};
\node(2)[node distance=1.2in, right of=1]{$GA$};
\node(3)[node distance=1in, below of=1]{$FA'$};
\node(4)[node distance=1in, below of=2]{$GA'$};
\node(5)[node distance=.7in, right of=1]{$$};
\node(6)[node distance=.5in, below of=5]{$$};
\node(7)[node distance=2in, right of=2]{$FA$};
\node(8)[node distance=1.2in, right of=7]{$GA$};
\node(9)[node distance=1in, below of=7]{$FA'$};
\node(10)[node distance=1in, below of=8]{$GA'$};
\node(11)[node distance=1.5in, right of=6]{$=$};
\node(12)[node distance=1.5in, right of=11]{$$};
\node(13)[node distance=.4in, below of=1]{$F\alpha$};
\node(14)[node distance=.54in, below of=1]{$\Rightarrow$};
\node(15)[node distance=.4in, below of=8]{$G\alpha$};
\node(16)[node distance=.54in, below of=8]{$\Rightarrow$};
\draw[->](1) to node [above]{$\theta_A$}(2);
\draw[->,bend right=40](1) to node [left]{$Ff$}(3);
\draw[->,bend left=40](1) to node [right]{$Fg$}(3);
\draw[->](2) to node [right]{$Gg$}(4);
\draw[->](3) to node [below]{$\theta_{A'}$}(4);
\draw[->](7) to node [above]{$\theta_A$}(8);
\draw[->](7) to node [left]{$Ff$}(9);
\draw[->,bend right=40](8) to node [left]{$Gf$}(10);
\draw[->,bend left=40](8) to node [right]{$Gg$}(10);
\draw[->](9) to node [below]{$\theta_{A'}$}(10);
\end{tikzpicture}$$
for each 2-cell $\alpha \colon f\Rightarrow g$ of $\mathfrak A$ between arrows $f,g\colon A\rightrightarrows A'$.
\end{enumerate}
A 2-natural transformation $\theta$ is a 2-natural isomorphism if each component $\theta_A$ is an isomorphism in $\mathfrak B$.  

A final layer of structure is given in the notion of a modification, originating in \cite{Benabou}.  This is a morphism $m\colon \theta\Rrightarrow \gamma$ of 2-natural transformations consisting of, for each $A\in \mathfrak A_0$, a 2-cell $m_A\colon \theta_A \Rightarrow \gamma_A$ satisfying the following compatibility condition:
\begin{enumerate}
\item[]{\textbf{Modification Condition.}} \label{modification condition} There is an equality of 2-cells
$$\begin{tikzpicture}
\node(1){$FA$};
\node(2)[node distance=1.6in, right of=1]{$GA$};
\node(3)[node distance=.8in, below of=1]{$FA'$};
\node(4)[node distance=.8in, below of=2]{$GA'$};
\node(5)[node distance=.8in, right of=1]{$\Uparrow m_A$};
\node(6)[node distance=.9in, below of=5]{$$};
\node(7)[node distance=.5in, below of=5]{$$};
\node(8)[node distance=1.7in, right of=7]{$=$};
\node(9)[node distance=1.75in, right of=2]{$FA$};
\node(10)[node distance=1.6in, right of=9]{$GA$};
\node(11)[node distance=.8in, below of=9]{$FA'$};
\node(12)[node distance=.8in, below of=10]{$GA'$};
\node(13)[node distance=.8in, right of=9]{$$};
\node(14)[node distance=.2in, below of=13]{$$};
\node(15)[node distance=.6in, below of=14]{$\Uparrow m_{A'}$};
\draw[->](2) to node [right]{$Gf$}(4);
\draw[->](1) to node [left]{$Ff$}(3);
\draw[->,bend left](1) to node [above]{$\gamma_A$}(2);
\draw[->,bend right](1) to node [below]{$\theta_A$}(2);
\draw[->,bend right](3) to node [below]{$\theta_{A'}$}(4);
\draw[->,bend left](9) to node [above]{$\gamma_A$}(10);
\draw[->](9) to node [left]{$Ff$}(11);
\draw[->,bend left](11) to node [above]{$\gamma_{A'}$}(12);
\draw[->,bend right](11) to node [below]{$\theta_{A'}$}(12);
\draw[->](10) to node [right]{$Gf$}(12);
\end{tikzpicture}$$
for each arrow $f\colon A\to A'$ of $\mathfrak A$.
\end{enumerate}
Throughout $[\mathfrak A,\mathfrak B]$ denotes the 2-category of 2-functors $\mathfrak A\to\mathfrak B$, their 2-natural transformations, and modifications between them.  In particular, $[\mathfrak A^{op},\mathfrak{Cat}]$ will be considered the appropriate 2-dimensional analogue of the ordinary category of presheaves on a small 1-category.  Throughout adopt a notational convention of \cite{MM} used for presheaves by denoting the action $Ff(X) = X\cdot f$ and similarly on morphisms, where $f\colon A\to B$ is an arrow of $\mathfrak A$ and $F\colon \mathfrak A^{op}\to \mathfrak{Cat}$ is a 2-functor.

Two 2-categories $\mathfrak A$ and $\mathfrak B$ are considered to be 2-equivalent if they are equivalent in the sense of $\mathbf{Cat}$-enriched categories as in \S 1.11 of \cite{Enriched}.  This is spelled out more explicitly in the following statment. 

\begin{define} \label{equiv of 2-cats} The 2-categories $\mathfrak A$ and $\mathfrak B$ are \textbf{2-equivalent} if there are 2-functors $F\colon \mathfrak A \rightleftarrows\mathfrak B\colon G$ and 2-natural isomorphisms $\eta\colon 1\cong GF$ and $\epsilon\colon FG\cong 1$.
\end{define}

It is worth recalling briefly the definitions of 2-limits and 2-colimits.  In particular, as in \S 3.1 of \cite{Enriched}, the 2-colimit of a 2-functor $F\colon \mathfrak J\to\mathfrak K$ weighted by a 2-functor $W\colon \mathfrak J^{op}\to\mathfrak{Cat}$ is an object $W\star F$ of $\mathfrak K$ fitting into an isomorphism of categories
\begin{equation} \label{2-colimit}
\mathfrak K(W\star F,A)\cong [\mathfrak J^{op},\mathfrak{Cat}](W,\mathfrak{Cat}(F,A))
\end{equation}
where $\mathfrak{Cat}(F,A)$ denotes the 2-functor given by $X\mapsto \mathfrak{Cat}(FX,A)$ for $X\in \mathfrak J_0$ and extended suitably to morphisms and 2-cells of $\mathfrak J$.  The weight $X\mapsto \mathbf 1$ is the ``trivial weight," denoted again by $\mathbf 1$.  A 2-colimit with trivial weight is a ``conical" colimit -- basically a ``boosted up" 1-dimensional colimit with a 2-dimensional aspect to its universal property.

\begin{remark}  One sometimes hears murmurings that the theory of $\mathcal V$-categories is an elaboration of 1-dimensional category theory and not properly 2-dimensional.  Insofar as this is the case, there is some question as to whether a proper 2-dimensional generalization of Elmendorf's Theorem can really be achieved with the machinery of $\mathbf{Cat}$-enriched theory recalled so far.  Not only that, but the present account applies only to strict 2-groups with their strict monoidal structure.  Thus, recall that a bicategory is like a 2-category where composition of morphisms is associative up to coherent isomorphism.  The precise details are not important here and the full definition can be found for example in the original source, namely, \cite{Benabou}.  Instead a fully 2-dimensional redevelopment of Elmendorf's theorem might consider coherent 2-groups in the sense of \cite{Baez2Groups} and construct a bicategory of ``bisheaves" or stacks to which a bicategory of continuous actions in an ``up-to-isomorphism" sense would be biequivalent.  Some remarks in the conclusion of this paper will discuss the possibility of such a result.  For the moment, the $\mathbf{Cat}$-enriched version of this paper certainly holds for strict 2-groups.  The result seems to be of some intrinsic interest, but also shows where there arise certain well-definition problems that can be solved in the strict case but likely not in the coherent case.
\end{remark}

\section{Continuous 2-Group Actions}

Here is recalled the notion of a strict topological 2-group and given the definition of a continuous action on a category viewed as a discrete 2-space.  For undefined categorical notions see \cite{MacLane}.

\subsection{2-Groups}

Recall that a group object in a finitely-complete category is an object of the category together with a group law morphism, an identity point and an inverse morphism, all satisfying diagrammatic versions of the usual group laws.  A category object in a finitely-complete category consists of an object of objects, an object of arrows, various domain, codomain, composition and inverse arrows, all satisfying diagrammatic versions of the usual category axioms.   

The pithy definition of a strict 2-group is that it is a group object in the category of small categories.  But there are several well-known equivalent formulations.  For example, it is well-established that 2-groups are essentially the same thing as so-called ``crossed modules" as introduced in \cite{Whitehead}.  This is basically Theorem 1 of \cite{BrownSpencer1}.  But in a more abstract vein, it follows from the pithy definition that a 2-group $\G$ is also a groupoid.  This seems first to have been proved in \cite{BrownSpencer1} as well.  So, a 2-group could be defined as a group object in groupoids.  By finite-limit arguments, a 2-group could also be seen as a category object in groups, or even a groupoid object in groups.  The present development has settled around the category-object perspective.

\begin{define} \label{2-group def}  A \textbf{2-Group} is a category object in the category of groups.
\end{define}

This means that a 2-group $\G$ is a pair of groups $\G_0$ and $\G_1$ organized into a category.  In particular, $\G$ has an underlying category and the group laws $\otimes_0$ and $\otimes_1$ coming with $\G_0$ and $\G_1$ yield a tensor bifunctor $\otimes\colon \G\times\G\to\G$ with a distinguished unit object $I\in \G_0$ with distinguished identity arrow $1_I$.  That the tensor is a bifunctor amounts to a so-called ``internchange law," namely, that
\begin{equation}
\label{interchange} (g\otimes h)(k\otimes l) = gk\otimes hl
\end{equation}  
holds for suitably composable morphisms $g$ and $k$ on the one hand and $h$ and $l$ on the other.  The tensor and unit make $\G$ a strict monoidal category as in \S VII.1 of \cite{MacLane} in which every object and arrow has an inverse under the tensor.  Write the inverses of objects $A\in \G_0$ and arrows $g\in\G_1$ as $A^{-1}$ and $g^{-1}$, respectively.  Given an arrow $g\colon A\to B$ of a 2-group $\G$, the compositional inverse is 
\begin{equation}\label{comp inverse} \bar g = 1_A\otimes g^{-1} \otimes 1_B
\end{equation}
as can be seen using the interchange law \ref{interchange}.

\begin{remark} Definition \ref{2-group def} gives a ``strict" version of the idea of a 2-dimensional group.  Such $\G$ is a strict monoidal category.  There are various ways of weakening the axioms.  One is the notion of a ``coherent" 2-group as described in \S 3 of \cite{Baez2Groups}.  Roughly speaking, a coherent 2-group $\G$ is a weak monoidal category in which every morphism is invertible and such that every object $A$ is equipped with an adjoint equivalence $\iota_A\colon I\to A\otimes \bar A$ and $\epsilon_A\colon \bar A\otimes A\to I$.  However, in the present account,``2-group" will always mean one in the sense of Definition \ref{2-group def}.  
\end{remark}

The point of the phrasing of Definition \ref{2-group def} is that it makes it easy to define categories of 2-groups with extra structure.  Recall that a topological group is a group object in topological spaces.

\begin{define} \label{top 2-group}  A \textbf{topological 2-group} is a category object in the category of topological groups.
\end{define}

Thus, a 2-topological 2-group $\G$ consists of two topological groups $\G_0$ and $\G_1$ together with continuous group homomorphisms
$$\begin{tikzpicture}
\node(1){$\G_2$};
\node(2)[node distance=1.2in, right of=1]{$\G_1$};
\node(3)[node distance=1.2in, right of=2]{$\G_0$};
\draw[transform canvas={yshift=1.7ex},->](1) to node [above]{$\pi_1$}(2);
\draw[transform canvas={yshift=-1.7ex},->](1) to node [below]{$\pi_2$}(2);
\draw[->](1) to node [fill=white]{$\circ$}(2);
\draw[transform canvas={yshift=1.7ex},->](2) to node [above]{$d_0$}(3);
\draw[transform canvas={yshift=-1.7ex},->](2) to node [below]{$d_1$}(3);
\draw[->](3) to node [fill=white]{$i$}(2);
\end{tikzpicture}$$
satisfying the usual category axioms.  The group laws $\otimes_0\colon \G_0\times\G_0\to\G_0$ and $\otimes_1\colon \G_1\times\G_1\to\G_1$ are the underlying functors of the induced monoidal tensor $\otimes\colon \G\times\G\to\G$.  

\begin{remark} The idea of a topological coherent 2-group is made precise in \S 7 of \cite{Baez2Groups}.  Following Definition 26 of the reference, a topological 2-group is taken to be a ``2-group object" in the 2-category of category objects in, for example, topological spaces or perhaps $k$-spaces.  For now, however, we stick to the strict notion of Definition \ref{top 2-group}.
\end{remark}

\begin{define} \label{top sub 2-group} A \textbf{topological sub-2-group} of a topological 2-group $\G$ is a subcategory $\mathscr U\subset \G$ that is a 2-group under the operations of $\G$ and such that $\mathscr U_0$ and $\mathscr U_1$ are open in $\G_0$ and $\G_1$, respectively.  \end{define}

\begin{remark}  This definition of an open sub-2-group fits with the approach that a topological 2-group is two topological groups organized into a category with suitable continuous group homomorphisms.  Additionally, each stabilizer as in Example \ref{stabilizer} below is an open sub-2-group in this sense.  However, this definition might be too strict.  For example, the notion of the stabilizer of an action as defined by a \emph{pseudo}-pullback as in Remark \ref{stabilizer pseudo pb} below is not a sub-2-group in this sense.  It seems, rather, what is needed in this case is that the ``inclusion" $\mathscr U\to \G$ is merely a faithful functor.  This point will be addressed in future work. 
\end{remark}

The open sub-2-groups of a topological 2-group $\G$ as in Definition \ref{top sub 2-group} comprise a poset category under inclusion.  Denote this category by $\mathbf L(\G)$.  Usually $\mathbf L(\G)$ will be viewed as a locally discrete 2-category.

\subsection{Actions}

Actions of (coherent) 2-groups on so-called (smooth) ``2-spaces" were studied in Bartel's Thesis \cite{BartelsThesis}.  These were axiomatized in an ``up-to-isomorphism" sense using a certain codescent condition.  Strict actions of 2-groups on categories have been considered more recently, for example, in \S 3 of \cite{MortonPicken} and applied in \cite{MortonPickenGauge}.  These deal with strict 2-groups and what might be thought of a strict actions.  These notions can be easily topologized using the diagrammatic phrasing of their definitions. 

As set up, first recall that for an ordinary topological group $G$, a continuous action on a set $X$, provided with the discrete topology, is a continuous function $m\colon X\times G\to X$ satisfying the usual associativity and unit conditions.  A morphism of continuous actions $X$ and $Y$ is a function $f\colon X\to Y$ that is equivariant in the sense that 
$$\begin{tikzpicture}
\node(1){$X\times G$};
\node(2)[node distance=1in, right of=1]{$X$};
\node(3)[node distance=.7in, below of=1]{$Y\times G$};
\node(4)[node distance=.7in, below of=2]{$Y$};
\node(5)[node distance=.5in, right of=1]{$$};
\node(6)[node distance=.35in, below of=5]{$=$};
\draw[->](1) to node [above]{$m$}(2);
\draw[->](1) to node [left]{$1\times f$}(3);
\draw[->](2) to node [right]{$f$}(4);
\draw[->](3) to node [below]{$n$}(4);
\end{tikzpicture}$$
commutes.  Throughout use $\mathbf BG$ to denote the category of continuous right $G$-actions.

As in, for example, \S V.6 of \cite{MM}, the axioms for such actions make sense in any category with finite limits.  So, for the following, consider a given small category $\X$ as a category object in topological spaces with the sets of objects and morphisms each provided with the discrete topology.  In this way, the category $\X$ is viewed as a ``discrete 2-space."

\begin{define} \label{action}  A \textbf{right action} of a 2-group $\G$ on a category $\X$ is a functor $M\colon \X\times \G\to\X$ satisfying diagrammatic versions of the usual action axioms.  If $\G$ is a topological 2-group, an action is continuous if each of the underlying set functions $M_0\colon \X_0\times \G_0\to\X_0$ and $M_1\colon \X_1\times \G_1\to\X_1$ is continuous.
\end{define}

Such actions, together with $\G$-equivariant morphisms and 2-cells between them satisfying a compatibility condition, form a 2-category, denoted by $\mathfrak B\G$.  The condition is the following:
\begin{enumerate}
\item[]{\textbf{Action 2-Cell Compatibility.}} \label{action 2-cell compat}  There is an equality of 2-cells 
$$\begin{tikzpicture}
\node(1){$\X\times \G$};
\node(2)[node distance=1.2in, right of=1]{$\X$};
\node(3)[node distance=1in, below of=1]{$\mathscr Y\times \G$};
\node(4)[node distance=1in, below of=2]{$\mathscr Y$};
\node(5)[node distance=.7in, right of=1]{$$};
\node(6)[node distance=.5in, below of=5]{$$};
\node(7)[node distance=2in, right of=2]{$\X\times \G$};
\node(8)[node distance=1.2in, right of=7]{$\X$};
\node(9)[node distance=1in, below of=7]{$\mathscr Y\times \G$};
\node(10)[node distance=1in, below of=8]{$\mathscr Y$};
\node(11)[node distance=1.5in, right of=6]{$=$};
\node(12)[node distance=1.5in, right of=11]{$$};
\node(13)[node distance=.4in, below of=1]{$\theta\times i$};
\node(14)[node distance=.54in, below of=1]{$\Rightarrow$};
\node(15)[node distance=.4in, below of=8]{$\theta$};
\node(16)[node distance=.54in, below of=8]{$\Rightarrow$};
\draw[->](1) to node [above]{$M$}(2);
\draw[->,bend right=40](1) to node [left]{$H\times 1$}(3);
\draw[->,bend left=40](1) to node [right]{$K\times 1$}(3);
\draw[->](2) to node [right]{$K$}(4);
\draw[->](3) to node [below]{$N$}(4);
\draw[->](7) to node [above]{$M$}(8);
\draw[->](7) to node [left]{$H\times 1$}(9);
\draw[->,bend right=40](8) to node [left]{$H$}(10);
\draw[->,bend left=40](8) to node [right]{$K$}(10);
\draw[->](9) to node [below]{$N$}(10);
\end{tikzpicture}$$
for discrete 2-spaces $\X$ and $\mathscr Y$ with given continuous actions $M\colon \X\times\G\to\X$ and $N\colon \mathscr Y\times \G\to\mathscr Y$ as in Definition \ref{action} and $\G$-equivariant morphisms $H,K\colon \X\rightrightarrows\mathscr Y$.
\end{enumerate}  

Throughout write $X\cdot A = M(X,A)$ and similarly on arrows to cut down on notation.  Notice that, with $X\in \X_0$ fixed, there is an induced functor
\[ X\cdot (-)\colon \G\to\X
\]
given by $A\mapsto X\cdot A$ on objects and by $f\mapsto 1_X\cdot f$ on morphisms.  This is continuous at the level of objects and at the level of morphisms.  Similarly, for each morphism $m\colon X\to Y$ of $\X$, there is a functor 
\[ m\cdot(-)\colon \G\to\X^{\mathbf 2}
\]
where $\X^{\mathbf 2}$ denotes the arrow category of $\X$.  This functor is given by $A\mapsto m\cdot 1_A$ on objects and by $g \mapsto m\cdot g$ viewed as a square in the arrow category using the bifunctor condition for the action.

\begin{remark} The appropriate ``up to isomorphism" version of the definition, roughly speaking, will be the following.  An action of a coherent 2-group $\G$ on a category $\X$ is a functor $M\colon \X\times \G\to \X$ together with an isomorphism
$$\begin{tikzpicture}
\node(1){$\X\times \G\times \G$};
\node(2)[node distance=1.4in, right of=1]{$\X\times \G$};
\node(3)[node distance=.7in, below of=1]{$\X\times \G$};
\node(4)[node distance=.7in, below of=2]{$\X$};
\node(5)[node distance=.7in, right of=1]{$$};
\node(6)[node distance=.35in, below of=5]{$\cong$};
\draw[->](1) to node [above]{$M\times 1$}(2);
\draw[->](1) to node [left]{$1\times \otimes$}(3);
\draw[->](2) to node [right]{$M$}(4);
\draw[->](3) to node [below]{$M$}(4);
\end{tikzpicture}$$
satisfying a ``codescent" coherence condition for associativity and a unit condition, formally resembling the pseudo-algebra coherence laws of \cite{LackCodescent}.  This is basically the approach taken in Bartel's Thesis \cite{BartelsThesis} for actions on 2-spaces axiomatized as category objects in smooth manifolds.
\end{remark}

\begin{example}[Stabilizer Sub-2-Groups] \label{stabilizer} The first pass on the definition of the stabilizer of a morphism $m\in \X_1$ under a (continuous) action $M\colon \X\times\G\to\G$ is that it is the category with objects $A\in\G_0$ such that $m\cdot 1_A=m$ holds and arrows those $g\in \G_1$ such that $m\cdot g=m$ holds.  Note that this computes the (strict) pullback
$$\begin{tikzpicture}
\node(1){$\stab(m)$};
\node(2)[node distance=1in, right of=1]{$\mathbf 1$};
\node(3)[node distance=.7in, below of=1]{$\G$};
\node(4)[node distance=.7in, below of=2]{$\X^{\mathbf 2}$};
\node(5)[node distance=.25in, right of=1]{$$};
\node(6)[node distance=.2in, below of=5]{$\lrcorner$};
\draw[->](1) to node [above]{$$}(2);
\draw[->](1) to node [left]{$$}(3);
\draw[->](2) to node [right]{$m$}(4);
\draw[->](3) to node [below]{$m\cdot -$}(4);
\end{tikzpicture}$$
taken in $\mathfrak{Cat}$.  The stabilizer of an object $X\in\X_0$ has as objects those $A\in\G_0$ such that $X\cdot A=X$ holds and as arrows those $g\in \G_1$ with $1_X\cdot g=1_X$.  This computes the strict pullback
$$\begin{tikzpicture}
\node(1){$\stab(X)$};
\node(2)[node distance=1in, right of=1]{$\mathbf 1$};
\node(3)[node distance=.7in, below of=1]{$\G$};
\node(4)[node distance=.7in, below of=2]{$\X$};
\node(5)[node distance=.25in, right of=1]{$$};
\node(6)[node distance=.2in, below of=5]{$\lrcorner$};
\draw[->](1) to node [above]{$$}(2);
\draw[->](1) to node [left]{$$}(3);
\draw[->](2) to node [right]{$X$}(4);
\draw[->](3) to node [below]{$X\cdot -$}(4);
\end{tikzpicture}$$
taken in $\mathfrak{Cat}$.  Each stabilizer is an open sub-2-group in the sense of Definition \ref{top sub 2-group} if the action is continuous.  Notice that $\stab(X)\cong \stab(1_X)$ holds for each object $X\in\X_0$.
\end{example}

\begin{remark} \label{stabilizer pseudo pb} For the present purposes, the definition above is fine.  However, the correct version for coherent 2-groups will need to be given in the ``up to isomorphism" sense.  The following is inspired by the discussion at \cite{StabilizerDiscussionOnline}.  The stabilizer of an object $X\in\X$ under the action $M\colon \X\times \G\to\X$ of a coherent 2-group will be given by the pseudo-pullback
$$\begin{tikzpicture}
\node(1){$\stab(X)$};
\node(2)[node distance=1in, right of=1]{$\mathbf 1$};
\node(3)[node distance=.7in, below of=1]{$\G$};
\node(4)[node distance=.7in, below of=2]{$\X$};
\node(5)[node distance=.5in, right of=1]{$$};
\node(6)[node distance=.35in, below of=5]{$\cong$};
\draw[->](1) to node [above]{$$}(2);
\draw[->](1) to node [left]{$$}(3);
\draw[->](2) to node [right]{$X$}(4);
\draw[->](3) to node [below]{$X\cdot -$}(4);
\end{tikzpicture}$$
taken in $\mathfrak{Cat}$.  Since $X\colon \mathbf 1\to \X$ is faithful and faithful functors are stable under pseudo-pullback, the induced functor from the stabilizer to $\G$ is faithful.  Additionally, the images of the underlying sets of objects and of morphisms in the stabilizer are open in $\G_0$ and $\G_1$, respectively.  Thus, $\stab(X)$ will be an open sub-2-group in a sense modifying slightly that of Definition \ref{top sub 2-group}.
\end{remark}

\begin{example}[Fixed Points of Action of Open Sub-2-Group] \label{fixed points} Suppose that $\G$ acts continuously on the right of $\X$ viewed as a discrete 2-space.  Let $\U\subset\G$ denote an open sub-2-group as in Definition \ref{top sub 2-group}.  Let $\X^{\U}$ denote the subcategory of $\X$ having objects those $X\in\X_0$ such that $X\cdot U=X$ for all $U\in \X_0$ and $1_X\cdot g = 1_X$ for all $g\in\G_1$ and with arrows those $f\colon X\to Y$ of $\X$ such that $f\cdot 1_U=f$ for all $U\in \U_0$.
\end{example}

\begin{lemma}  Suppose that $\G$ acts continuously on $\X$.  The correspondence
\[ \mathscr U \mapsto \X^{\U} 
\]
extends to a functor $\mathbf L(\G)^{op} \to\mathbf{Cat}$.
\end{lemma}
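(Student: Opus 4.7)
The plan is to verify directly that the fixed-point construction of Example \ref{fixed points} defines a subcategory of $\X$ for each $\U$, that passage to a smaller sub-2-group gives an inclusion in the opposite direction, and that this assignment respects identities and composition of inclusions. Because $\mathbf L(\G)$ is treated as a locally discrete 2-category, no 2-cell data needs to be considered; only the assignment on objects and arrows of $\mathbf L(\G)$ must be checked.

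First I would confirm that $\X^{\U}$ really is a subcategory of $\X$. Closure under identities follows from bifunctoriality of the action: if $X\in\X^{\U}_0$, then for any $U\in\U_0$ we have $1_X\cdot 1_U = 1_{X\cdot U} = 1_X$, so $1_X\in\X^{\U}_1$. Closure under composition uses the functoriality of $M\colon \X\times\G\to\X$ in its first argument: given composable $f,g\in\X^{\U}_1$ and $U\in\U_0$, we compute $(gf)\cdot 1_U = (g\cdot 1_U)(f\cdot 1_U) = gf$. Hence $\X^{\U}$ is a genuine subcategory of $\X$.

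Next, for an inclusion $\U\subseteq \V$ in $\mathbf L(\G)$, I would define the action on morphisms by sending this inclusion to the evident inclusion functor $\X^{\V}\hookrightarrow \X^{\U}$. This is well-defined because the fixed-point conditions defining $\X^{\V}$ entail those defining $\X^{\U}$: any $X\in\X^{\V}_0$ satisfies $X\cdot U = X$ and $1_X\cdot g = 1_X$ for all $U\in\V_0\supseteq \U_0$ and all $g\in\V_1\supseteq\U_1$, so a fortiori for $U\in\U_0$ and $g\in\U_1$, and similarly on arrows. Thus the assignment $(\U\subseteq\V)\mapsto (\X^{\V}\hookrightarrow \X^{\U})$ reverses direction as required for a functor out of $\mathbf L(\G)^{op}$.

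Functoriality is then immediate: the identity inclusion $\U\subseteq\U$ is sent to the identity functor on $\X^{\U}$, and a composition of inclusions $\U\subseteq\V\subseteq\W$ is sent to the composite of the corresponding inclusion functors $\X^{\W}\hookrightarrow\X^{\V}\hookrightarrow\X^{\U}$. There is no real obstacle here; the only mild point of care is to check both the object-level and the arrow-level fixed-point conditions when verifying that $\X^{\U}$ is a subcategory, where one must invoke bifunctoriality of the action (equivalently, the interchange law) to see that the conditions are preserved under identities and composition in $\X$.
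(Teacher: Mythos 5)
Your proposal is correct and takes essentially the same approach as the paper: the paper's proof is the single observation that the arrow assignment is given by restriction, since whatever is fixed by an open sub-2-group is fixed by any open sub-2-group contained in it. Your additional verification that each $\X^{\U}$ is a subcategory of $\X$ (via functoriality of the action $M$) is routine but sound, and merely makes explicit what the paper leaves implicit in Example \ref{fixed points}.
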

\begin{proof}  The assignment on arrows is given by restriction since whatever is fixed by the action of an open sub-2-group is certainly fixed by any open sub-2-group contained it it.  \end{proof}

\section{The Site for Continuous Group Actions}

Here we review Elmendorf's Theorem in \cite{Elmendorf} showing that continous group actions can be organized into a Grothendieck topos.  The reference used for sheaf theory and toposes is Chapter III of \cite{MM}.

\subsection{Grothendieck Topologies and Sheaves}

A sieve on an object $C\in\C_0$ is a set of arrows $S$ with codomain $C$ satisfying the closure condition that if $f\in S$ and $fg$ is defined, then $fg\in S$ holds too.  Equivalently, a sieve $S$ on $C\in\C_0$ is a subfunctor of the canonical representable functor $\mathbf yC\colon \C^{op}\to\mathbf{Set}$.

\begin{define} \label{groth top} A \textbf{Grothendieck topology} $J$ assigns to each object $U\in \C$ a set of sieves $J(U)$ on $U$ in such a way that
\begin{enumerate}
\item the total sieve $\lbrace f\in \C_1\mid d_1f=U\rbrace$ on $U$ is in $J(U)$;
\item if $S$ is in $J(U)$ and $h\colon V\to W$ is any arrow, then the pullback $h^*S$ sieve is in $J(d_0 h)$;
\item if $S$ is in $J(U)$ and $R$ is any sieve on $U$ such that $h^*R\in J(W)$ holds for any arrow $h\colon V\to U$ of $S$, then $R$ is in $J(U)$ too.
\end{enumerate}
The objects of $J(U)$ are called ``covering sieves" or just ``coverings" or even ``covers."  A site is a pair $(\C,J)$ for a small category $\C$ and a topology $J$.
\end{define}

Every set of arrows with codomain $C\in \C_0$ generates a sieve on $C$.  Grothendieck topologies are often defined by giving certain distinguished ``basis subsets" of arrows whose generated sieves then generate a topology under appropriate conditions.

\begin{example} \label{atomic top} The atomic topology is that given by 
\[ JC = \lbrace S\mid S\;\text{is a nonempty sieve on}\; C\rbrace.
\]
For this definition to satisfy the axioms of Definition \ref{groth top}, any cospan of $\C$ must admit a span making a commutative square as indicated by the dashed arrows in 
$$\begin{tikzpicture}
\node(1){$\cdot$};
\node(2)[node distance=1in, right of=1]{$\cdot$};
\node(3)[node distance=.7in, below of=1]{$\cdot$};
\node(4)[node distance=.7in, below of=2]{$\cdot$};
\node(5)[node distance=.5in, right of=1]{$$};
\node(6)[node distance=.35in, below of=5]{$=$};
\draw[->,dashed](1) to node [above]{$$}(2);
\draw[->,dashed](1) to node [left]{$$}(3);
\draw[->](2) to node [right]{$$}(4);
\draw[->](3) to node [below]{$$}(4);
\end{tikzpicture}$$
This is called the ``Right Ore Condition" in \cite{Elephant} and \cite{Elephant2}.  Evidently this condition holds, for example, if $\C$ has pullbacks.  The atomic topology has as a basis the sieves generated by singletons.
\end{example}

\begin{define} Let $F\colon \C^{op}\to\mathbf{Set}$ denote a presheaf.  A \textbf{matching family} for $F$ on a cover $S$ of $C\in\C_0$ is a natural transformation $\theta\colon S\to F$ viewing $S$ as a subfunctor of $\mathbf yC$.  Denote the image of the arrow $f\colon B\to C$ in $S$ under $\theta_C\colon SC\to FC$ by $x_f$.
\end{define}

\begin{define} \label{sheaf def} A presheaf $F\colon \C^{op}\to\mathbf{Set}$ is a \textbf{sheaf} for $J$, or a $J$-sheaf, if every matching family on any object $C\in \C$ has a unique amalgamation, namely, a unique element $x\in FC$ such that $Ff(x) = x_f$ for each covering arrow $f\colon B\to C$.  A Grothendieck topos is a category $\E$ equivalent to a category of sheaves on a site $\E\simeq \mathbf{Sh}(\C,J)$.
\end{define}

\begin{remark}  The statment of the sheaf condition in Definition \ref{sheaf def} is just that pullback by $m\colon S\to \mathbf yC$, a covering sieve, induces an bijection of hom-sets
\begin{equation} \label{sheaf isomorphism} [\C^{op},\mathbf{Set}](\mathbf yC,F)\cong [\C^{op},\mathbf{Set}](S,F)
\end{equation}
where $[\C^{op},\mathbf{Set}]$ denotes the 1-category of presheaves on $\C$.
\end{remark}

\begin{lemma} \label{sheaf atomic top characterization} A presheaf $F\colon \C^{op}\to\mathbf{Set}$ is a sheaf for the atomic topology on $\C$ if, and only if, for any $x\in FC$ and any arrow $f\colon C\to D$ such that $Fh(x) = Fk(x)$ holds for all diagrams
$$\begin{tikzpicture}
\node(1){$B$};
\node(2)[node distance=.8in, right of=1]{$C$};
\node(3)[node distance=.8in, right of=2]{$D$};
\draw[->,transform canvas={yshift=.8ex}](1) to node [above]{$h$}(2);
\draw[->,transform canvas={yshift=-0.8ex}](1) to node [below]{$k$}(2);
\draw[->](2) to node [above]{$f$}(3);
\end{tikzpicture}$$
satisfying $fh=fk$, there is a unique $y\in FD$ such that $Ff(y)=x$.
\end{lemma}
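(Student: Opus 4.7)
The plan is to exploit the fact that the atomic topology admits a particularly simple basis, namely the singleton covers $\{f\colon C\to D\}$, whose existence as a basis relies precisely on the Right Ore condition recalled in Example \ref{atomic top}. Since a presheaf is a sheaf for a Grothendieck topology if and only if it is a sheaf for any generating basis (see, e.g., \S III.3 of \cite{MM}), it suffices to check the sheaf condition only on singleton basis covers. The core of the proof is then to recognize that the statement of the lemma is exactly the sheaf condition on these basis covers, unravelled.

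The first technical step I would carry out is the unpacking: on a singleton basis cover $\{f\colon C\to D\}$, the data of a matching family for $F$ reduces, by naturality, to a single element $x = \theta_C(f) \in FC$, since any $fh$ in the generated sieve forces $\theta_B(fh)=Fh(x)$. The compatibility constraint that a matching family be well-defined reduces to the requirement that $Fh(x)=Fk(x)$ whenever $h,k\colon B\to C$ satisfy $fh=fk$. An amalgamation is then precisely an element $y \in FD$ with $Ff(y)=x$, since its values on all other arrows of the generated sieve are determined by $y$ via functoriality.

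Once this reduction is made, both directions are essentially a single step. For the forward direction, given $x\in FC$ and $f\colon C\to D$ satisfying the hypothesis, the element $x$ defines a matching family on $\{f\}$, and the sheaf property produces the required unique $y\in FD$ with $Ff(y)=x$. For the backward direction, any matching family on a singleton basis cover $\{f\}$ corresponds, by the reduction above, to an element $x\in FC$ satisfying the stated compatibility; the hypothesis supplies exactly the unique $y\in FD$ with $Ff(y)=x$, so $F$ satisfies the sheaf condition on every singleton basis cover and hence is an atomic sheaf.

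The only genuine content is the matching-family-to-element reduction, and the one point requiring care is that when passing from the full atomic topology (which assigns every nonempty sieve as a cover) to the singleton basis, one must invoke the standard equivalence between sheaves for a topology and sheaves for a basis. There is no real obstacle; everything else is definitional unwinding, and the lemma is essentially a restatement of the basis-cover sheaf condition in the atomic case.
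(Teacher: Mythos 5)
Your proof is correct and is essentially the standard argument: the paper itself gives no proof of this lemma, simply citing III.4.2 of \cite{MM}, and your reduction of a matching family on a singleton-generated sieve to a single element $x\in FC$ subject to the coequalizing condition $Fh(x)=Fk(x)$ for $fh=fk$ (with an amalgamation being exactly a $y$ with $Ff(y)=x$) is precisely the argument carried out in that reference. The one step you rightly flag, passing from the atomic topology on all nonempty sieves to the singleton basis via the standard basis-versus-topology comparison, is the only non-definitional ingredient, and your handling of it is sound.
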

\begin{proof} See III.4.2 of \cite{MM}. \end{proof}

\begin{remark}  The condition of Lemma \ref{sheaf atomic top characterization} is a well-definition condition as is seen in the proof in the reference.  This will reappear in dimension 2 in the proofs of Propositions \ref{surjective when sheaf} and \ref{full when sheaf}.  \end{remark}

\subsection{Review of the Site for Continuous Group Actions}

A textbook account of Elmendorf's result with some simplifications appears, for example, in \S III.9 of \cite{MM}.  The development here points out a few subtleties for the 2-group generalization.

Throughout let $G$ denote a topological group with $e\in \G$ its unit element; and $\mathbf BG$, the category of continuous right $G$-actions on discrete topological spaces $X$ and $G$-equivariant maps between them.  Call the objects of $\mathbf BG$ ``continuous (right) $G$-sets."

For such a $G$-set $X$ and an open subgroup $U\subset G$, let $X^U$ denote the set of $U$-fixed points, namely,
\begin{equation} \label{U-fixed points}
X^U=\lbrace x\in X\mid xg=x\; \text{for all}\;g\in U\rbrace.
\end{equation}
The right cosets of $U$ are sets of the form
\[ Ug= \lbrace ug\mid u\in U\rbrace
\]
for $g\in G$.  Cosets $Ux$ and $Uy$ are equal if, and only if, $xy^{-1}\in U$ holds.  Now, the ``orbit space of right cosets" is $G/U = \lbrace Ug\mid g\in G\rbrace$.  This is characterized in the following way.  

\begin{lemma}  The orbit of cosets $G/U$ as above is the coequalizer
$$\begin{tikzpicture}
\pgfmathsetmacro{\shift}{0.3ex}
\node(1){$U\times G$};
\node(2)[node distance=1in, right of=1]{$G$};
\node(3)[node distance=.8in, right of=2]{$G/U$};
\draw[->,dashed](2) to node [above]{$q$}(3);
\draw[transform canvas={yshift=0.5ex},->](1) to node [above]{$\pi$}(2);
\draw[transform canvas={yshift=-0.5ex},->](1) to node [below]{$-\otimes-$}(2);
\end{tikzpicture}$$
taken in $\mathbf{Set}$ where the group law is restricted to $U$ via the inclusion $U\subset G$.
\end{lemma}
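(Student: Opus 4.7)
The plan is to exhibit the canonical quotient map $q\colon G\to G/U$ sending $g\mapsto Ug$ as the universal map coequalizing the projection $\pi\colon U\times G\to G$ and the restricted group law $-\otimes-\colon U\times G\to G$, $(u,g)\mapsto ug$. The first step is to check that $q$ does coequalize the pair: for any $(u,g)\in U\times G$ one has $q(\pi(u,g))=Ug$ and $q(u\otimes g)=Uug=Ug$, since $Uu=U$ whenever $u\in U$. So $q\circ\pi = q\circ(-\otimes -)$.

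For the universal property, suppose $f\colon G\to X$ is any function in $\mathbf{Set}$ with $f\circ \pi = f\circ(-\otimes -)$; that is, $f(g) = f(ug)$ for every $u\in U$ and $g\in G$. I would define $\bar f\colon G/U\to X$ by $\bar f(Ug):=f(g)$. To see this is well-defined, note that $Ug=Ug'$ iff $gg'^{-1}\in U$, i.e.\ $g=ug'$ for some $u\in U$; the coequalizing hypothesis then gives $f(g)=f(ug')=f(g')$. By construction $\bar f\circ q = f$, and uniqueness of the factorization is automatic since $q$ is surjective.

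The argument is a routine set-theoretic verification; there is no real obstacle. The only point worth flagging is the convention: the ``right cosets'' $Ug$ are the orbits of the \emph{left} action of $U$ on $G$ obtained by restricting the group law along the inclusion $U\subset G$, which is why the second map in the coequalizer diagram is the restricted multiplication rather than a right-translation. With this convention fixed, the rest of the statement is just the familiar presentation of a quotient set as a coequalizer.
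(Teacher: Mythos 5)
Your proof is correct; the paper states this lemma without any proof, and your routine verification---checking that $q(g)=Ug$ coequalizes the pair since $Uug=Ug$ for $u\in U$, and obtaining the unique factorization $\bar f(Ug)=f(g)$ from surjectivity of $q$---is exactly the standard argument the author evidently intends. Your flagged convention, that the right cosets $Ug$ are the orbits of the restricted \emph{left} multiplication $U\times G\to G$, agrees with the paper's definition $Ug=\lbrace ug\mid u\in U\rbrace$ and its equality criterion $Ux=Uy$ iff $xy^{-1}\in U$.
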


\begin{remark} \label{action on orbit space} The quotient topology on $G/U$ turns out to be discrete since $U$ is open in $G$.  Each orbit space of cosets admits a continuous right action of $G$, making it an object of $\mathbf BG$.  The action is given by right multiplication on representatives $(Ux)\cdot g = Uxg$.
\end{remark}

Now it is appropriate to introduce the underlying category of the site for $\mathbf BG$.  Let $\mathbf S(G)$ denote the following category.  Objects are coset spaces $G/U$ for $U\subset G$ open subgroups.  Arrows $G/U\to G/V$ are objects $g\in G$ for which the inclusion $U\subset g^{-1}Vg$ holds.  This inclusion is equivalent to the statement that the map $G/U\to G/V$ given by $Ux\mapsto Vgx$ is well-defined.  Identities are thus represented by the neutral element $e\in G$.  Composition is $G$-multiplication.  

Equip $\mathbf S(G)$ with the atomic topology as in Example \ref{atomic top}.  The main result is the following.

\begin{theo}[Elmendorf \cite{Elmendorf}]  There is an equivalence of categories
\[ \mathbf BG\simeq \mathbf{Sh}(\mathbf S(G),J_{at})
\]
where $J_{at}$ denotes the atomic topology on $\mathbf S(G)$.
\end{theo}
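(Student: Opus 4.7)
The plan is to construct explicit quasi-inverse functors $\Phi\colon \mathbf BG \rightleftarrows \mathbf{Sh}(\mathbf S(G),J_{at})\colon \Psi$ and verify natural isomorphisms $\Phi\Psi\cong 1$ and $\Psi\Phi\cong 1$. On objects, $\Phi$ sends a continuous $G$-set $X$ to the presheaf $\Phi(X)(G/U):=X^U$, since a $G$-equivariant map $G/U\to X$ is determined by (and equivalent to) the image of the coset $U$, which must be $U$-fixed. The contravariant action on morphisms is $x\mapsto xg$, which sends $X^V$ to $X^U$ precisely because $U\subset g^{-1}Vg$. The key point is that $\Phi(X)$ is a sheaf for $J_{at}$: using Lemma \ref{sheaf atomic top characterization}, given $g\colon G/V\to G/W$ and $x\in X^V$ satisfying the equalizer hypothesis, the candidate amalgamation is $y:=xg^{-1}$, and its being $W$-fixed follows from continuity of the action combined with the compatibility constraint extracted from testing against appropriate pairs $h,k$.

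Next I would define $\Psi$ via the filtered colimit $\Psi(F):=\mathrm{colim}_{U} F(G/U)$, indexed over the poset of open subgroups ordered by reverse inclusion, with transition maps induced by the morphisms $G/U\to G/V$ (for $U\subset V$) represented by the neutral element $e\in G$. The right $G$-action on a class $[x]$ with $x\in F(G/U)$ is defined by $[x]\cdot g := [F(g\colon G/V\to G/U)(x)]$ for any open $V\subset U\cap g^{-1}Ug$; this is continuous for the discrete topology since $\Psi(F)$ is a filtered colimit of discrete sets. The unit $X\to \Psi\Phi(X)$ sends $x\in X$ to its class at the open subgroup $\mathrm{Stab}(x)$, which exists by continuity; this is a $G$-equivariant bijection because every $x$ is fixed by some open $U$, and no collapse occurs in the colimit. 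The counit $\Phi\Psi(F)\to F$ identifies a class $[x]\in F(G/U)$ lying inside $\Psi(F)^U$ with the element $x$ itself, and the sheaf axiom ensures this is an isomorphism component-wise.

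The main obstacle will be the well-definedness and functoriality of the $G$-action on $\Psi(F)$: one must juggle three open subgroups and appeal to the sheaf condition for $J_{at}$ to show independence of the choice of intermediate $V$ and to verify the associativity and unit axioms of the action. The Right Ore Condition, which holds in $\mathbf S(G)$ by Example \ref{atomic top} (pullbacks computed via intersection-and-conjugation), is essential here to compatibly refine any pair of open subgroups to a common one inside them. Once the action is pinned down correctly, the remaining checks of the equivalence are routine naturality diagram chases.
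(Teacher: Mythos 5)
Your proposal is correct and follows essentially the same route as the paper, which itself defers to \S III.9 of \cite{MM}: $\Phi$ given by $U$-fixed points (naturally isomorphic to the representable $\mathbf BG(G/U,X)$ the paper uses), $\Psi$ by the filtered colimit over open subgroups with the conjugation action on germ classes, and the sheaf condition invoked exactly where the paper invokes Lemma \ref{sheaf atomic top characterization} to get surjectivity of $F(G/U)\to \Psi(F)^U$. One small correction: well-definedness of the $G$-action on $\Psi(F)$ (independence of the intermediate $V$) needs only filteredness of the poset of open subgroups and functoriality of $F$, not the sheaf axiom, which is why $\Psi$ is defined on all presheaves.
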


The full proof followed here appears in \S III.9 of \cite{MM}.  However, some comments are in order to guide the categorification in \S 4.  The functor
\[ \Phi\colon\mathbf BG \to [\mathbf S(G)^{op},\mathbf{Set}]
\]
takes a continuous $G$-set $X$ to the representable presheaf $\mathbf BG(-,X)\colon \mathbf S(G)^{op}\to\mathbf{Set}$.  On the other hand, the functor 
\[ \Psi\colon [\mathbf S(G)^{op},\mathbf{Set}] \to \mathbf BG
\]
takes a presheaf $F$ on $\mathbf S(G)$ to the colimit
\[ \lim_{\mathclap{\substack{\to \\ U }}} F(G/U)
\]
taken in $\mathbf{Set}$ over all open subgroups $U\subset G$.  This is well-defined because the colimit admits a continuous right action of $G$.  For the colimit is filtered, hence computable (as in \S 2.13 of \cite{Handbook1}) as a set of germ-like equivalence classes $[U,x]$ for $x\in F(G/U)$ under the relation $(U,x)\sim (V,y)$ if, and only if, there is an open subgroup $W\subset U\cap V$ such that $x\cdot e =y\cdot e$ holds in $F(G/W)$.  The action is then given by declaring $[U,x]\cdot g = [g^{-1}Ug,x\cdot g]$.

Now, let us consider an outline of the equivalence. For each continuous $G$-set $X$, there is always a $G$-equivariant isomorphism
\begin{equation} \label{equiv iso 1} X\cong \lim_{\to} \mathbf BG(-,X)
\end{equation}
since $\mathbf BG(-,X)$ can be computed as the set of $U$-fixed points, namely, as
\begin{equation} \label{fixed point isomorphis} \mathbf BG(G/U,X) \cong X^U
\end{equation}
naturally in $U$.  The isomorphism \ref{equiv iso 1} is natural in $X$ and thus yields one half of the equivalence.  On the other hand, if $F$ is a sheaf for the atomic topology, then there are natural isomorphisms
\begin{equation} \label{equiv sheaf iso} F(G/U) \cong \mathbf BG(G/U,\lim_{\mathclap{\substack{\to \\ V}}} F(G/V)).
\end{equation}
This follows first by using the isomorphism in Equation \ref{fixed point isomorphis}.  From the colimit computation, the induced map can be seen to be always injective, and to be surjective whenever $F$ is a sheaf by the criterion of Lemma \ref{sheaf atomic top characterization}.  Thus, restricting to the presheaves that are sheaves, the desired equivalence is obtained.

\section{The 2-Site for Continuous 2-Group Actions}

This section gives three preliminary subsections of definitions and results necessary to prove the main theorem of the paper, namely, Theorem \ref{MAIN THEOREM}.  First in \S 4.1 is reviewed the appropriate notion of ``2-dimensional sheaf."  Following this in \S 4.2 is the construction of the underlying 2-category of the 2-site for $\mathfrak B\G$.  And \S 4.3 includes the necessary technical preliminaries for the main theorem.

\subsection{2-Sites and 2-Sheaves}

Topologies and sites in dimension 2 were probably first considered in Street's papers \cite{StreetSheafThy} and \cite{StreetCharacterization}.  Here we nearly follow the presentation of the former, as it gives the correct setting for the strict 2-groups considered here.  Development for coherent 2-groups will be based on the account of 2- and bisites in terms of ``coverages" as in \S C2 of \cite{Elephant2} and their ``up to isomorphism" generalization at \cite{nlab:2-sheaf}.

\begin{define}[Cf. \S 3.1 \cite{StreetSheafThy}] \label{2-sieve defn}  A \textbf{sieve} on an object $C\in\mathfrak C_0$ is a pointwise injective-on-objects and faithful 2-natural transformation $S\to\mathbf yC$.  A \textbf{topology} on a 2-category $\mathfrak C$ is a collection of such sieves that constitute a Grothendieck topology, as in Definition \ref{groth top}, on 1-category $|\mathfrak C|$ underlying $\mathfrak C$.  A 2-site is a 2-category $\mathfrak C$ together with a topology $J$, displayed as $(\mathfrak C,J)$.
\end{define}

\begin{remark} \label{Remark on Street} In the reference, Street defines a sieve on such an object $C$ to be an chronic arrow $S\to \mathbf yC$ in $[\mathfrak C^{op},\mathfrak{Cat}]$, that is, a pointwise injective-on-objects and fully faithful 2-natural transformation $S\to \mathbf yC$.  This definition, however, does not work for the present example of the site for continuous 2-group actions.  For requiring ``full" as a part of the definition would rule out the following example as a topology.  Additionally, the requirement would cause the proof of the technical result Proposition \ref{surjective when sheaf} to fail.
\end{remark}

\begin{example} \label{atomic 2-topology} The \textbf{atomic topology} on a 2-category $\mathfrak C$ is given by taking as basis families on $C\in \mathfrak C_0$ the singletons $\lbrace f\colon D\to C\rbrace$ over all arrows $f\colon D\to C$ with codomain $C$.  For this it must be supposed that each cospan completes to a commutative square as in Example \ref{atomic top}.  Thus, in this case, covering sieves are those generated by singletons.  That is, if $C\in\mathfrak C_0$ is an object, the covering sieve generated by $f\colon D\to C$ consists of all arrows $g\colon B\to C$ factoring through $f$ by some $k\colon B\to D$.  A morphism of such arrows $g$ and $h$ is a 2-cell $\alpha\colon g\Rightarrow h$ that factors through $f$ via another 2-cell say $\beta\colon k\to l$ in the sense that $f\ast \beta=\alpha$ holds where $k$ and $l$ factor $g$ and $h$ through $f$, respectively.  A 2-category equipped with the atomic topology is called an atomic 2-site.  Notice that requiring ``full" would not work here because not every 2-cell with codomain $C$ will factor through $f$ unless $f$ is representably full.  
\end{example}

The notion of ``2-sheaf" used here is essentially that of \S 3.1 of \cite{StreetSheafThy}.  It is a ``2-ified," or more specifically ``enriched," version of \ref{sheaf isomorphism}.

\begin{define}[Cf. \S 3.1 \cite{StreetSheafThy}] \label{defn 2-sheaf}  A \textbf{2-sheaf} for 2-site $(\mathfrak C,J)$ is a 2-functor $F\colon \mathfrak C^{op}\to\mathfrak{Cat}$ such that for any covering sieve $S\to \mathbf yC$ on any object $C\in \mathfrak C$, there is induced an isomorphism of categories
\begin{equation} \label{2-sheaf isomor} [\mathfrak C^{op},\mathfrak{Cat}](\mathbf yC,F)\cong [\mathfrak C^{op},\mathfrak{Cat}](S, F)
\end{equation}
given by pulling back along $S\to \mathbf yC$.  Let $\mathfrak{Sh}(\mathfrak C,J)$ denote the 2-category of 2-sheaves on the 2-site $(\mathfrak C,J)$ together with 2-natural transformations and their modifications.
\end{define}

Think of the 2-natural transformations on the right side of \ref{2-sheaf isomor} as ``compatible data on the cover $S$."  Note that if $(\mathfrak C,J)$ is any site, then each $X\in FC$ for $C\in\mathfrak C$ determines compatible object data on any covering sieve $S$ on $C$.  This is given by taking $X_f$ to be $Ff(X)$ for each $f\in S$.  Similarly for arrows of $FC$.  A crucial technical result to be used later is the following.

\begin{lemma}\label{atomic top lemma}  Let $(\mathfrak C,J)$ denote an atomic 2-site and $F$ a 2-sheaf.  For each $f\colon C\to D$, the induced functor $Ff \colon FD\to FC$ is injective-on-objects and faithful.
\end{lemma}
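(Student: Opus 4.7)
The plan is to apply the 2-sheaf condition to the covering sieve $S \to \mathbf y D$ generated by the singleton $\{f \colon C \to D\}$ in the atomic topology, and to combine this with the enriched Yoneda lemma to realize $FD$ as the category $[\mathfrak C^{op},\mathfrak{Cat}](S,F)$ of compatible data on the sieve. Both conclusions will then reduce to the observation that every arrow and every 2-cell appearing in $S$ is determined, via whiskering or composition with $f$, by the action of $Ff$ alone.

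First I would set up the sieve. By the description in Example \ref{atomic 2-topology}, the sieve $S$ generated by $f$ has as arrows over $B$ all $g \colon B \to D$ admitting a factorization $g = fk$ through $f$, and as 2-cells those $\alpha \colon g \Rightarrow h$ that arise as $f \ast \beta$ for some $\beta \colon k \Rightarrow l$ with $g = fk$ and $h = fl$. The sheaf condition gives an isomorphism of categories
\[
FD \;\cong\; [\mathfrak C^{op},\mathfrak{Cat}](\mathbf y D, F) \;\cong\; [\mathfrak C^{op},\mathfrak{Cat}](S, F),
\]
where the first step is the (enriched) Yoneda lemma and the second is pullback along the inclusion $S \hookrightarrow \mathbf y D$. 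Under this identification, an object $X \in FD$ corresponds to the 2-natural transformation $\hat X|_S$ with component at $B$ sending $g \in S_B$ to $Fg(X)$ and a 2-cell $\alpha \in S_B$ to $(F\alpha)_X$; a morphism $\phi \colon X \to Y$ of $FD$ corresponds to the modification $\hat\phi|_S$ whose component at $g \in S_B$ is $Fg(\phi) \colon Fg(X) \to Fg(Y)$.

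For injectivity on objects, I would assume $Ff(X) = Ff(Y)$ and show $\hat X|_S = \hat Y|_S$. On an arrow $g = fk \in S_B$, 2-functoriality of $F$ gives $Fg(X) = Fk(Ff(X)) = Fk(Ff(Y)) = Fg(Y)$. On a 2-cell $\alpha = f \ast \beta$ with $\beta \colon k \Rightarrow l$, preservation of horizontal composition yields $F\alpha = F\beta \ast Ff$, so its component at $X$ is $(F\beta)_{Ff(X)} = (F\beta)_{Ff(Y)}$, which is its component at $Y$. Thus $\hat X|_S = \hat Y|_S$, and the sheaf isomorphism forces $X = Y$. For faithfulness, I would run the same argument one dimension up: assuming $Ff(\phi) = Ff(\psi)$ for $\phi,\psi \colon X \to Y$, the component of $\hat\phi|_S$ at $g = fk$ is $Fg(\phi) = Fk(Ff(\phi)) = Fk(Ff(\psi)) = Fg(\psi)$, so $\hat\phi|_S = \hat\psi|_S$ as modifications, and the sheaf isomorphism, being an isomorphism of hom-categories, gives $\phi = \psi$.

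No step seems genuinely hard; the only real care is bookkeeping to confirm that \emph{every} generator of $S$ (both 1-cells and 2-cells) decomposes through $f$ in a way that factors through $Ff$, and that the sheaf condition really is an isomorphism of categories (so that it gives faithfulness, not merely injectivity on objects). This is precisely why Remark \ref{Remark on Street} dropped the ``full'' requirement from the definition of sieve: if ``full'' were imposed, not every 2-cell in the relevant hom-categories would be captured by $S$, and the reduction in the faithfulness step would break.
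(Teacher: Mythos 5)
Your proposal is correct and follows essentially the same route as the paper's (much terser) proof: the paper simply observes that $X\cdot f=Y\cdot f$ forces $X$ and $Y$ to determine the same compatible data on the sieve generated by $f$, whence $X=Y$ by the 2-sheaf isomorphism, with a parallel argument for faithfulness. Your write-up just makes explicit the bookkeeping (factoring of arrows and whiskered 2-cells through $f$, and the use of the Yoneda identification $FD\cong[\mathfrak C^{op},\mathfrak{Cat}](S,F)$) that the paper leaves implicit.
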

\begin{proof}  Any two $X,Y\in F(D)_0$ for which $X\cdot f = Y\cdot f$ holds determine the same data on the covering sieve generated by $\lbrace f\colon C\to D \rbrace$.  Thus, $X=Y$ holds.  A similar argument shows that $Ff$ is faithful.  \end{proof}

\subsection{The Underlying 2-Category}

Let $\G$ denote a topological 2-group as in Definition \ref{top 2-group}.  The goal is to define the underlying 2-category $\mathfrak S(\G)$ of a proposed 2-site for $\mathfrak B\G$.

\begin{lemma}  Let $\mathscr U\subset\G$ denote an open sub-2-group of $\G$ as in Definition \ref{top sub 2-group}.  The groups $A^{-1}\U_0A$ and $1_{A^{-1}}\U_11_A$ with operations inherited from $\G$ yield an open sub-2-group denoted simply by $A^{-1}\U A$.
\end{lemma}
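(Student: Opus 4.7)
The plan is to reduce everything to two observations: (i) conjugation by a fixed element of a topological group is a topological-group automorphism, hence a homeomorphism that carries open subgroups to open subgroups; and (ii) the tensor structure on $\G$ makes conjugation compatible with domain, codomain, identity, and composition. The argument then splits into a level-0 part, a level-1 part, and a compatibility check.

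First I would set up the conjugation maps. Define $c_A\colon \G_0\to\G_0$ by $B\mapsto A^{-1}BA$ and $c_{1_A}\colon \G_1\to\G_1$ by $g\mapsto 1_{A^{-1}}\,g\,1_A$. Both are continuous group homomorphisms since the tensors $\otimes_0$ and $\otimes_1$ are continuous; their two-sided inverses are $c_{A^{-1}}$ and $c_{1_{A^{-1}}}$, so they are homeomorphisms. Applying $c_A$ to the open subgroup $\U_0\subset\G_0$ gives the open subgroup $A^{-1}\U_0A\subset\G_0$; similarly $1_{A^{-1}}\U_1 1_A\subset\G_1$ is an open subgroup.

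Next I would check that the pair $(A^{-1}\U_0A,\,1_{A^{-1}}\U_1 1_A)$ forms a subcategory of $\G$. For an arrow $1_{A^{-1}}g\,1_A$ with $g\in\U_1$, its domain in $\G$ is $A^{-1}(d_0g)A\in A^{-1}\U_0A$ because $d_0g\in\U_0$; likewise for the codomain. The identity at $A^{-1}UA$ is $1_{A^{-1}UA}=1_{A^{-1}}\otimes 1_U\otimes 1_A$ by functoriality of $\otimes$, which lies in $1_{A^{-1}}\U_1 1_A$ since $1_U\in \U_1$. For composition, given composable $g,h\in\U_1$, applying the interchange law~\ref{interchange} twice yields
\[
(1_{A^{-1}}h\,1_A)(1_{A^{-1}}g\,1_A)=1_{A^{-1}}(hg)\,1_A,
\]
which again lies in $1_{A^{-1}}\U_1 1_A$ as $hg\in\U_1$. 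Together with the closure under inverses already established at the group level, this shows the pair is closed under all categorical operations inherited from $\G$.

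Finally, to see that what we have is a sub-2-group in the sense of Definition~\ref{top sub 2-group}, I would note that by construction its object-level and morphism-level groups are groups (being conjugates of the groups $\U_0,\U_1$ inside the ambient topological groups), and the categorical structure maps ($d_0,d_1,i,\circ$) just verified above are restrictions of the corresponding maps of $\G$; since these are continuous group homomorphisms on $\G$, their restrictions remain continuous group homomorphisms, so $A^{-1}\U A$ is a category object in topological groups, i.e.\ a topological 2-group, and the inclusion into $\G$ has open image at both levels. The only real subtlety is the composition clause, where one must be careful to use the interchange law rather than naive bilinearity; that is the single step I expect to require thought, everything else being a transcription of the fact that conjugation by a chosen object of a strict monoidal category with inverses is an inner automorphism of the ambient 2-group.
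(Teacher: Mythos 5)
Your proposal is correct and follows essentially the same route as the paper's (much terser) proof: openness of the conjugate subgroups because conjugation is a homeomorphic automorphism at each level, and the 2-group structure inherited from $\U$ via the structure maps and the interchange law. You have simply made explicit the details that the paper leaves implicit.
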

\begin{proof}  The conjugate subgroups $A^{-1}\U_0A$ and $1_{A^{-1}}\U_11_A$ are open in $\G_0$ and in $\G_1$, respectively.  The subgroup and 2-group structure of $A^{-1}\U A$ is inherited from that of $\U$.  \end{proof}

Define $\G/\mathscr U$ to be the category whose objects are cosets $\U_0A$ for $A\in \G_0$ and whose arrows are cosets $\U_1g$ for $g\in \G_1$.  In other words, declare
\[ (\G/\U)_0 := \G_0/\U_0 \qquad (\G/\U)_1 := \G_1/\U_1.
\]
Thus, if $g\colon A\to B$ denotes an arrow of $\G$, then the domain of $\U_1g$ is $\U_0A$ while the codomain is $\U_0B$.  These assignments are well-defined, for if $h\colon C\to D$ has $\U_1g=\U_1h$, then $gh^{-1}\in \U_1$, so that $A\otimes C^{-1}, B\otimes D^{-1}\in\U_0$, since $\U\subset \G$ is a subcategory.  The identity on $\U_0A$ is $\U_11_A$, which is well-defined again since $\U$ is a subcategory.  Composition is defined to be the coset of the $\G$-composition of arbitrary representatives.  This is well-defined by the interchange law \ref{interchange} in $\G$. 

Now, $\G$ acts continuously on the right of each coset category $\G/\U$.  For this, consider the ordinary actions on the underlying sets given by 
\begin{equation} \label{action on 2-orbit obj} \G_0/\U_0\times \G_0 \to \G_0/\U_0 \qquad (\U_0A,B)\mapsto \U_0(A\otimes B)
\end{equation}
and 
\begin{equation} \label{action on 2-orbit arr} \G_1/\U_1\times \G_1 \to \G_1/\U_1 \qquad (\U_1g,h)\mapsto \U_1(g\otimes h).
\end{equation}
Each is a continuous action of a group on a set.  So, it remains only to see that these functions are the underlying functions of a functor $\G/\U\times \G\to\G$.  The domain, codomain and identity laws hold by the construction of $\G/\U$ as a category.  That composition is preserved follows from the interchange law in $\G$.

\begin{prop}  If $\U$ is an open sub-2-group of $\G$, then $\G$ acts continuously on the right of $\G/\U$ in the sense of Definition \ref{action}.  Additionally, $\G/\U$ is an object of $\mathfrak B\G$.
\end{prop}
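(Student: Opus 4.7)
The plan is to complete what the paragraph preceding the proposition has already set up. That paragraph exhibits the set-level formulas \ref{action on 2-orbit obj} and \ref{action on 2-orbit arr} and observes that they assemble into a functor $M\colon \G/\U\times\G\to\G/\U$: compatibility with domain, codomain and identity maps is built into the definition of $\G/\U$, while preservation of composition follows from the interchange law \ref{interchange}. So the proposition reduces to three checks: the diagrammatic action axioms of Definition \ref{action}, the discrete 2-space structure on $\G/\U$, and continuity of $M_0$ and $M_1$.

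For the action axioms I would argue pointwise, passing to representatives. On objects, associativity unfolds as
\[ (\U_0 A\cdot B)\cdot C = \U_0\bigl((A\otimes B)\otimes C\bigr) = \U_0\bigl(A\otimes(B\otimes C)\bigr) = \U_0 A\cdot (B\otimes C),\]
which holds by associativity of $\otimes_0$ in $\G_0$, and the unit axiom $\U_0 A\cdot I = \U_0 A$ is immediate from $A\otimes I = A$. The same calculations performed with $\otimes_1$ and $1_I$ dispatch the arrow level. These pointwise equalities promote to the commutativity of the associativity and unit diagrams demanded by Definition \ref{action}, since the maps involved are, by construction, given on representatives by exactly those monoidal operations.

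For the topological content, openness of $\U_0\subset\G_0$ and $\U_1\subset\G_1$ makes each coset $\U_0 A$ and $\U_1 g$ open, hence the quotient topologies on $(\G/\U)_0 = \G_0/\U_0$ and $(\G/\U)_1 = \G_1/\U_1$ are discrete. This is the classical argument recorded in Remark \ref{action on orbit space}, now applied once on objects and once on arrows. Thus $\G/\U$, with its structural maps of domain, codomain, identity and composition inherited from $\G$, is a category object in topological spaces with discrete object- and arrow-spaces, i.e.\ a discrete 2-space. Continuity of $M_0$ and $M_1$ then reduces to the continuity of the right action of a topological group on one of its coset spaces, once more as in Remark \ref{action on orbit space}, applied to $\G_0$ acting on $\G_0/\U_0$ and to $\G_1$ acting on $\G_1/\U_1$. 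Putting the three parts together shows that $M$ is a continuous action in the sense of Definition \ref{action} and hence that $\G/\U$ is an object of $\mathfrak B\G$. There is no real obstacle: the only step needing any care is the discreteness of the quotient topologies, and that is precisely what lets us regard $\G/\U$ as an object of $\mathfrak B\G$ rather than a purely categorical artifact.
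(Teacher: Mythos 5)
Your proposal is correct and follows the same route as the paper: the paper's proof simply points back to the preceding discussion for the functoriality and action axioms (which you spell out on representatives) and then notes that openness of $\U_0$ and $\U_1$ makes the quotient topologies on $\G_0/\U_0$ and $\G_1/\U_1$ discrete, exactly as you argue. Your version is just a more explicit write-up of the same argument.
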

\begin{proof}  The first statement was proved in the discussion above.  The topologies on each of the sets $\G_0/\U_0$ and $\G_1/\U_1$ are discrete because $\U_0\subset \G_0$ and $\U_1\subset \G_1$ are open subgroups.  \end{proof}

\begin{construction} \label{underlying 2-cat construction} Now, define the underlying 2-category.  Objects of $\mathfrak S(\G)$ will be those categories $\G/\U$ over the open sub-2-groups $\mathscr U\subset \G$.  A morphism $\G/\U\to \G/\V$ will be an object $A\in \G_0$ such that the containments
\begin{equation} \label{well-defn cond1} \U_0\subset A^{-1}\V_0A \qquad \U_1\subset 1_{A^{-1}} \V_11_A
\end{equation} 
both hold, i.e. that $\U\subset A^{-1}\V A$ holds as open sub-2-groups of $\G$.  The containment is equivalent to requiring that the assignments between ordinary coset spaces 
\begin{equation} \label{well-def mor 2-orbit cat1}\G_0/\U_0 \to \G_0/\V_0, \qquad \U_0X \mapsto \V_0(A\otimes X )
\end{equation}
\begin{equation}\label{well-def mor 2-orbit cat2}
\G_1/\U_1\to\G_1/\V_1,\qquad \U_1g\mapsto \V_1(1_A\otimes g)
\end{equation}
are well-defined.  Each $A\colon \G/\U\to\G/\V$ satisfying the well-definition condition $\U\subset A^{-1}\V A$ is indeed a functor.  Accordingly, each $g\colon A\to B$ of $\G$ inducing a 2-cell $g\colon A\Rightarrow B$ of $\mathfrak S(\G)$ ought to be a natural transformation.  The definition of the $\U_0X$-component will be
\begin{equation} \label{2-cell cond for 2-orbit cat} g_{\U_0X}:=\V_1g\otimes 1_X.
\end{equation}
Provided this is well-defined, naturality follows by the bifunctor condition for $\otimes \colon \G\times\G\to\G$.  Well-definition is equivalent to the statement 
\begin{equation} \label{well-definition 2-cells orbit cat} (X\in\U_0)\;\; \text{implies that}\;\; (g\otimes 1_X\otimes g^{-1}\in \V_1).
\end{equation}
Thus, take the 2-cells of $\mathfrak S(\G)$ to be those $g\colon A\Rightarrow B$ given by \ref{2-cell cond for 2-orbit cat} and satisfying the well-definition condition \ref{well-definition 2-cells orbit cat}.  Composition of morphisms $A\colon \G/\U\to\G/\V$ and $B\colon \G/\V\to\G/\W$ is defined to be $B\otimes A\colon \G/\U\to\G/\W$.  Horizontal composition of 2-cells in $\mathfrak S(\G)$ is defined to be the tensor in $\G$.  Well-definition follows by the bifunctor condition and the assumed well-definition conditions.  Vertical composition is defined using the composition in $\G$.  Well-definition follows again by the bifunctor condition and the fact that $\V$ is a subcategory.    
\end{construction}

\begin{lemma}  The data and operations of the discussion above make $\mathfrak S(\G)$ a 2-category.
\end{lemma}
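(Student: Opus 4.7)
The strategy is to push essentially all of the verification down to the fact that $\G$ is a strict monoidal category whose tensor and composition satisfy the interchange law \ref{interchange}, and to the fact that each open sub-2-group $\U \subset \G$ is closed under products, inverses, and compositions. Because $\mathfrak S(\G)$ is constructed by quotienting $\G$-data by coset relations determined by the $\U$'s, most axioms follow verbatim from the corresponding axioms in $\G$ once well-definedness is checked.

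First, I would verify that each morphism $A \colon \G/\U \to \G/\V$ is genuinely a functor, i.e.\ that the assignments \ref{well-def mor 2-orbit cat1} and \ref{well-def mor 2-orbit cat2} preserve identities, domains, codomains, and composition. These reductions are immediate from the bifunctoriality of $\otimes$. Similarly, the naturality of $g \colon A \Rightarrow B$ defined by $g_{\U_0 X} = \V_1 g \otimes 1_X$ on a coset morphism $\U_1 h$ is a direct instance of the interchange law \ref{interchange} applied to representatives.

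Next, I would check that identities and compositions satisfy the required well-definition conditions. The identity morphism on $\G/\U$ is the unit $I \in \G_0$, for which $\U \subset I^{-1} \U I = \U$ trivially; the identity 2-cell $1_A \colon A \Rightarrow A$ satisfies \ref{well-definition 2-cells orbit cat} because $1_A \otimes 1_X \otimes 1_{A^{-1}} = 1_{A \otimes X \otimes A^{-1}}$, which lies in $\V_1$ whenever $A \otimes X \otimes A^{-1} \in \V_0$. For the composite $B \otimes A$ of morphisms $A \colon \G/\U \to \G/\V$ and $B \colon \G/\V \to \G/\W$, the containment $\U \subset (B \otimes A)^{-1} \W (B \otimes A)$ follows by stacking the given containments for $A$ and $B$. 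Well-definedness of horizontal and vertical composition of 2-cells is analogous: each time one must verify that the newly formed 2-cell's conjugation condition holds, and this reduces to closure of $\W_1$ as a subgroup and subcategory, combined with the interchange law in $\G$.

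All the remaining 2-category axioms — strict associativity of morphism composition, associativity of both vertical and horizontal composition of 2-cells, left and right unit laws, and the interchange law relating horizontal to vertical composition of 2-cells — are inherited directly from the corresponding properties of $\otimes$ and composition in $\G$. In particular, the interchange law in $\mathfrak S(\G)$ is literally the image under cosetting of equation \ref{interchange} for $\G$. The main obstacle in executing this plan is not conceptual but purely bookkeeping: one must carefully track, for each freshly composed 2-cell, which conjugation condition \ref{well-definition 2-cells orbit cat} it is required to satisfy and confirm it by a short manipulation using the interchange law together with subgroup and subcategory closure of the $\U_i$. No genuinely 2-categorical subtlety arises beyond what is already enforced by the strict monoidal structure of $\G$.
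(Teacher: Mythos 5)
Your proposal is correct and follows the same route as the paper: the well-definition checks are exactly those carried out in Construction \ref{underlying 2-cat construction}, and the paper's proof of the lemma itself consists precisely of the observation that associativity, units, and interchange are all inherited from the strict monoidal structure of $\G$. Your version simply spells out more of the bookkeeping that the paper leaves implicit.
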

\begin{proof}  Associativity and unit laws of each composition, as well as the interchange law relating the two 2-cell compositions, are all inherited from the corresponding properties of $\G$.  Notice that if $\G$ is a weak monoidal category, then $\mathfrak S(\G)$ will be a bicategory and not a (strict) 2-category.  \end{proof}

Note that each morphism $A\colon \G/\U\to\G/\V$ of $\mathfrak S(\G)$ is $\G$-equivariant and that each $g\colon A\Rightarrow B$ satisfies the 2-cell compatibility condition \ref{action 2-cell compat}.  Thus, each morphism of $\mathfrak S(\G)$ is a morphism of $\mathfrak B\G$ and each 2-cell of $\mathfrak S(\G)$ is one of $\mathfrak B\G$.

Now, by construction, every morphism of $\mathfrak S(\G)$ is basically a pair of surjective set functions.  For this reason, thinking of surjectivity as ``covering," declare those sieves on objects of $\mathfrak S(\G)$ to be covering that are generated by singletons.  This is a topology on $\mathfrak S(\G)$ because the Right Ore Condition of Example \ref{atomic top} is satisfied in a manner similar to the ordinary 1-category $S(G)$ for a topological group $G$. 

\subsection{Technical Results}

Throughout $\mathbf L(\G)$ denotes the poset of open sub-2-groups of $\G$, viewed as a locally discrete 2-category.  Let $F\colon \mathfrak S(\G)^{op}\to\mathfrak{Cat}$ denote a 2-functor with $\mathfrak S(\G)$ as in Construction \ref{underlying 2-cat construction}.

\begin{construction}[Colimit Category]  Construct what will be a category 
\begin{equation} \label{colimit cat} \lim_{\mathclap{\substack{\to \\\U }}} F(\G/\U)
\end{equation}
in the following way.  Take as a set of objects
\begin{equation} \label{objects}
(\lim_{\mathclap{\substack{\to \\\U }}} F(\G/\U))_0 := \lim_{\mathclap{\substack{\to \\ \U}}} F(\G/\U)_0
\end{equation}
and as a set of arrows
\begin{equation} \label{arrows}
(\lim_{\mathclap{\substack{\to \\\U }}} F(\G/\U))_1 := \lim_{\mathclap{\substack{\to \\ \U}}} F(\G/\U)_1
\end{equation}
with each colimit taken over $\mathbf L(\G)$ in the category of sets.  These are filtered colimits.  Thus, by the computation of \S 2.13 of \cite{Handbook1}, these sets are described in terms of certain equivalence relations.  Objects are equivalence classes of pairs $[\U,X]$ with $X\in F(\G/\U)_0$ under the relation
\begin{equation} (\U,X)\sim (\V,Y) \qquad \text{if, and only if} \qquad \exists\; \mathscr W\subset \U\cup \V\;\text{such that} \; X\cdot I = Y\cdot I.
\end{equation}
Similarly, arrows are equivalence classes $[\U,m]$ with $m\in F(\G/\U)_1$ under the relation
\begin{equation} (\U,m)\sim (\V, n) \qquad \text{if, and only if} \qquad \exists\; \mathscr W\subset \U\cup \V\;\text{such that} \; m\cdot I = n\cdot I.
\end{equation}
Take the domain of an arrow $[\U,m]$ with $m\colon X\to Y$ to be $[\U, X]$ and the codomain to be $[\U,Y]$.  The identity on $[\U,X]$ is taken to be $[\U,1_X]$.  Composition of $[\U,n]$ and $[\V,m]$ with $[\U, \dom n]=[\V,\cod m]$ is taken to be $[\mathscr W,(n\cdot I)(m\cdot I)]$ for any $\mathscr W\subset \U\cap \V$.  This is independent of $\mathscr W$ and the choice of representatives.
\end{construction}

\begin{prop} \label{colimit prop} If $F\colon \mathfrak S(\G)^{op}\to\mathfrak{Cat}$ is a 2-functor, then, with objects and arrows as in \ref{objects} and \ref{arrows} and with operations as described in the discussion above,
\[ \lim_{\mathclap{\substack{\to \\\U }}} F(\G/\U)
\]
is a category.  Additionally, it is the 2-colimit in the sense of \ref{2-colimit} of the induced 2-functor $F\colon \mathbf L(\G)^{op} \to\mathfrak{Cat}$ given by restricting $F$ to the poset of open sub-2-groups.
\end{prop}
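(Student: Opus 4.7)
The plan is to proceed in two stages: first verify that the described data form a category, and second establish the 2-dimensional universal property of equation \ref{2-colimit} taken against the trivial weight $\mathbf 1$.

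For the first stage, the non-trivial points are transitivity of the equivalence relations on objects and arrows, well-definition of composition independent of the chosen representatives and of the common refinement $\W\subset\U\cap\V$, and the associativity and unit axioms. Each descends from the corresponding property inside a single fibre $F(\G/\W)$, using that intersections of open sub-2-groups are again open sub-2-groups (so common refinements always exist), that the canonical arrow $I\colon \G/\W\to\G/\U$ of $\mathfrak S(\G)$ exists whenever $\W\subset\U$ (take $A=I$ in Construction \ref{underlying 2-cat construction}, since the containment $\W\subset I^{-1}\U I=\U$ is immediate), and that $F$ is a 2-functor into $\mathfrak{Cat}$.

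For the second stage, since $\mathbf L(\G)$ is locally discrete, the 2-natural compatibility condition and the modification condition each carry no non-trivial content on 2-cells of $\mathbf L(\G)$. Consequently the right-hand side of
\[
\mathfrak{Cat}\bigl(\lim_{\to\U} F(\G/\U),\, A\bigr) \;\cong\; [\mathbf L(\G)^{op},\mathfrak{Cat}]\bigl(\mathbf 1,\, \mathfrak{Cat}(F(\G/-),A)\bigr)
\]
unpacks to the category whose objects are families of functors $\{H_\U\colon F(\G/\U)\to A\}$ satisfying $H_\W\circ(-\cdot I)=H_\U$ whenever $\W\subset\U$, and whose morphisms are families of natural transformations $\{\theta_\U\colon H_\U\Rightarrow K_\U\}$ compatible in the analogous sense. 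I would exhibit the isomorphism by sending a functor $H$ out of the colimit to the family $\{H\circ\iota_\U\}$ of composites with the evident coprojections $\iota_\U\colon F(\G/\U)\to\lim_\to$, $X\mapsto [\U,X]$, $m\mapsto[\U,m]$, and by sending a natural transformation $\theta\colon H\Rightarrow K$ to the family $\{\theta\ast\iota_\U\}$; the inverse sends a compatible family $\{H_\U\}$ to the functor $[\U,X]\mapsto H_\U(X)$, $[\U,m]\mapsto H_\U(m)$, and a compatible family $\{\theta_\U\}$ to the transformation whose component at $[\U,X]$ is $(\theta_\U)_X$.

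The main obstacle is well-definition in the inverse direction on equivalence classes. If $[\U,X]=[\V,Y]$, choose $\W\subset\U\cap\V$ with $X\cdot I=Y\cdot I$ in $F(\G/\W)$; then compatibility of $\{H_\U\}$ forces
\[
H_\U(X) = H_\W(X\cdot I) = H_\W(Y\cdot I) = H_\V(Y),
\]
and the analogous argument handles morphisms and the components of the transformation. Once this is in place, functoriality, naturality, mutual inverseness of the assignments, and 2-naturality in $A$ are all routine.
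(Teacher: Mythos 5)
Your proposal is correct and follows essentially the same route as the paper: the paper disposes of the category structure by citing the standard construction of filtered colimits of $\mathbf{Cat}$-valued functors and then asserts that the mutually inverse functors $\Phi$ and $\Psi$ realizing the isomorphism \ref{2-colimit} with trivial weight are routine to construct, which is exactly the pair of assignments you write down explicitly. Your treatment of the well-definition of the inverse on equivalence classes via a common refinement $\W\subset\U\cap\V$ (correcting the paper's evident typo $\U\cup\V$ in the displayed relation) is precisely the content the paper leaves implicit.
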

\begin{proof}  The construction of the filtered colimit of any functor valued in $\mathbf{Cat}$ is discussed in Example 5.2.2.f of \cite{Handbook2}.  The present construction of the category in \ref{colimit cat} is a special case.  However, this is indeed the 2-colimit as well.  For it is routine to construct functors 
\[ \Phi\colon \mathfrak{Cat}(\lim_{\mathclap{\substack{\to \\ \U}}}F(\G/\U),\A)\rightleftarrows [\mathbf L(\G)^{op},\mathfrak{Cat}](\mathbf 1,\mathfrak{Cat}(F,\A))\colon \Psi
\]
and show that these are mutually inverse to give the isomorphism of \ref{2-colimit}.  \end{proof}

The next goal is to show that the colimit category of \ref{colimit cat} is an object of $\mathfrak B\G$.  First define correspondences at the object and arrow levels.  Define an action of $\G_0$ on objects of the colimit by 
\begin{equation} \label{action obj assign} [\U,X]\cdot A := [A^{-1}\mathscr U A, X\cdot A]
\end{equation}
for $A\in \G_0$.  This is a well-defined, continuous action of $\G_0$ by the group laws and the fact that $\mathbf L(\G)$ is filtered.  The action at the level of arrows is more complicated.  For this, take an arrow $[\U,m]$ of the colimit represented by $m\colon X\to Y$ and take an arrow $g\colon A\to B$ of $\G$.  Let $\mathscr O$ denote the intersection
\[ \mathscr O := A^{-1}\U A\cap B^{-1}\U B.
\] 
By the construction of the 2-category $\mathfrak S(\G)$, the arrow $g$ induces a 2-cell 
$$\begin{tikzpicture}
\node(1){$\G/\mathscr O$};
\node(2)[node distance=1in, right of=1]{$\Downarrow g$};
\node(3)[node distance=1in, right of=2]{$\G/\U$};
\node(4)[node distance=.4in, above of=2]{$\G/A^{-1}\U A$};
\node(5)[node distance=.4in, below of=2]{$\G/B^{-1}\U B$};
\draw[->](1) to node [above]{$I$}(4);
\draw[->](1) to node [below]{$I$}(5);
\draw[->](4) to node [above]{$A$}(3);
\draw[->](5) to node [below]{$B$}(3);
\end{tikzpicture}$$
hence a natural transformation $Fg\colon FA \Rightarrow FB$.  For simplicity, denote the component of $Fg$ at an object $Z\in F(\G/\mathscr U)$ by $g_Z\colon Z\cdot A \to Z\cdot B$.  Now, define
\begin{equation} \label{action arrow assign} [\U,m]\cdot g:= [\mathscr O,\xi]
\end{equation}
where $\xi$ is either side of the commutative square
$$\begin{tikzpicture}
\node(1){$X\cdot A$};
\node(2)[node distance=1in, right of=1]{$Y\cdot A$};
\node(3)[node distance=.8in, below of=1]{$X\cdot B$};
\node(4)[node distance=.8in, below of=2]{$Y\cdot B$};
\node(5)[node distance=.5in, right of=1]{$$};
\node(6)[node distance=.4in, below of=5]{$=$};
\draw[->](1) to node [above]{$m\cdot A$}(2);
\draw[->](1) to node [left]{$g_X$}(3);
\draw[->](2) to node [right]{$g_Y$}(4);
\draw[->](3) to node [below]{$m\cdot B$}(4);
\end{tikzpicture}$$
of $F(\G/\mathscr O)$.  The assignment of \ref{action arrow assign} is well-defined by the 2-functoriality of $F$.

\begin{lemma} \label{action on colimit} The assignments of \ref{action obj assign} and \ref{action arrow assign} give a continuous action of $\G_1$ on the arrows of the category \[\lim_{\mathclap{\substack{\to \\\U }}} F(\G/\U)
\]
compatible with the action of $\G_0$ on the objects in the sense that the two together yield a functor 
\[ \lim_{\mathclap{\substack{\to \\\U }}} F(\G/\U)\times \G\to \lim_{\mathclap{\substack{\to \\\U }}} F(\G/\U)
\]
satisfying the properties of a continuous action of $\G$ on the colimit.
\end{lemma}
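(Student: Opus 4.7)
The plan is to verify that \ref{action obj assign} and \ref{action arrow assign} respect the equivalence relations defining the colimit, that together they assemble into a functor $(\lim F)\times\G \to \lim F$, and that this functor satisfies the associativity and unit laws of Definition \ref{action}; continuity will then be automatic since all sets in sight carry the discrete topology.

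First I would check that the arrow action in \ref{action arrow assign} is compatible with the object action at the level of domains and codomains. Given $[\U,m]\cdot g = [\mathscr O, \xi]$ with $m\colon X\to Y$ in $F(\G/\U)$ and $g\colon A\to B$ in $\G$, the diagonal $\xi$ in the commutative square has domain $X\cdot A$ and codomain $Y\cdot B$ in $F(\G/\mathscr O)$. Since $\mathscr O\subset A^{-1}\U A$, the filtered-colimit relation identifies $[\mathscr O, X\cdot A]$ with $[A^{-1}\U A, X\cdot A] = [\U,X]\cdot A$, and similarly at the codomain, so the arrow action really does extend the object action.

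Next I would verify well-definedness under change of representative. If $(\U, m)\sim(\V, n)$ via some $\mathscr W\subset\U\cap \V$ with $m\cdot I = n\cdot I$, then the common refinement $A^{-1}\mathscr W A\cap B^{-1}\mathscr W B$ sits inside both possible choices of $\mathscr O$, and 2-functoriality of $F$ together with the agreement of the $F$-images of $g$ on this refined sub-2-group forces the two resulting diagonals to agree there, hence in the colimit. Preservation of identities in $\G$ is immediate: the identity 2-cell at $1_I$ induces, via $F$, an identity natural transformation, so $[\U, m]\cdot 1_I = [\U, m]$. Preservation of composition --- the crucial bifunctoriality statement --- requires the interchange law \ref{interchange} in $\G$: pasting the naturality squares for $Fg_1$ and $Fg_2$ on a common refinement of the relevant intersection sub-2-groups yields a square whose horizontal edges are recognized, via \ref{interchange} applied inside $\G$ and then through the 2-functor $F$, as the images of $g_1\cdot g_2$ acting on $X$ and on $Y$.

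Finally, the action axioms $([\U,X]\cdot A)\cdot B = [\U,X]\cdot(A\otimes B)$ and $[\U, X]\cdot I = [\U, X]$, together with their arrow analogues, reduce to the associativity and unit of $\otimes$ in $\G$ via equalities of conjugate sub-2-groups and the strict 2-functoriality of $F$. Continuity at both object and arrow levels is immediate because $\G_0$, $\G_1$, and the underlying object and arrow sets of the colimit are all discrete. The main obstacle will be cleanly handling the compositional axiom for the arrow action: one must simultaneously manage three choices of intersection sub-2-groups (for $g_1$, for $g_2$, and for the composite $g_1\cdot g_2$), collapse to a common refinement in $\mathbf L(\G)$, and paste naturality squares of $F$ in a way that matches the single square defining the action of the composite --- this is where the bulk of the bookkeeping lies.
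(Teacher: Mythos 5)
Your outline of the algebraic verifications (well-definedness under change of representative, compatibility of the arrow action with domains and codomains, preservation of identities and of composition via the interchange law) matches what the paper dismisses as a ``tedious but ultimately straightforward check,'' and that part of the plan is sound. The genuine gap is your treatment of continuity. You assert that continuity ``will then be automatic since all sets in sight carry the discrete topology,'' but $\G_0$ and $\G_1$ are \emph{not} discrete: $\G$ is a topological 2-group, and the whole point of the continuity hypothesis throughout the paper is that a map $X\times \G_0\to X$ (or $X_1\times\G_1\to X_1$) with discrete domain factor $X$ and an arbitrary topological group is not automatically continuous. Continuity is a genuine condition here --- it amounts to the openness in $\G_0$ (resp.\ $\G_1$) of sets of the form $\lbrace A : [\U,X]\cdot A=[\V,Y]\rbrace$ --- and it is precisely the part of the lemma to which the paper's own proof devotes its attention, at the arrow level.

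The missing argument must use the openness of the sub-2-groups $\U$. At the object level, for example, if $U\in\U_0$ then $(U\otimes A)^{-1}\U_0(U\otimes A)=A^{-1}\U_0A$ and $X\cdot(U\otimes A)=X\cdot A$, since $U\otimes A$ and $A$ induce the same morphism $\G/(A^{-1}\U A)\to\G/\U$ of $\mathfrak S(\G)$; hence the preimage of any point under $[\U,X]\cdot(-)\colon\G_0\to(\lim_{\to}F(\G/\U))_0$ is a union of cosets $\U_0A$, which is open because $\U_0$ is open in $\G_0$. The paper runs the analogous argument for arrows, showing that the preimage of a point $[\U,m]$ under the action map is a union of products of single arrows of the colimit with open subsets of $\G_1$, hence open in the product topology. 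You should replace the ``automatic'' remark with an explicit verification of this kind; as written, the continuity claim does not follow from what you have established.
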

\begin{proof}  That the assignments constitute a functor satisfying the axioms for an action is a tedious but ultimately straightforward check using the definition of the actions and the relation forming the colimit.  Continuity of the action at the arrow-level requires some discussion, however.  It suffices to see that the inverse image of a point $[\U,m]$ is an open subset of the domain.  Let $g\colon A\to B$ be any arrow of $\G$.  Consider the class $[\mathscr O, \xi]$ with $\mathscr O$ and $\xi$ as in the discussion above.  By the properties of $F$ as a 2-functor, it follows that 
\[ [\mathscr O, \xi]\cdot g = [\U,m]
\]
holds, meaning that the inverse image of the action is some subset of the arrows of the colimit crossed by direct product with $\G_1$ itself.  Since the arrows of the colimit have the discrete topology, this is an open subset of the domain.  Thus, the action is continuous.\end{proof}

Now, let $\X$ denote a discrete 2-space on which $\G$ acts continuously on the right.  Let $\mathscr U\subset \G$ denote an open sub-2-group.  The claim is that there is an isomorphism of categories
\begin{equation} \label{fixed pts isom} \mathfrak B\G(\G/\mathscr U,\X)\cong \X^{\mathscr U}
\end{equation}
that is, the 2-dimensional analogue of \ref{fixed point isomorphis} for the fixed point open sub-2-group of Example \ref{U-fixed points}.  On the one hand, given a $\G$-equivariant functor $F\colon \G/\mathscr U\to \X$ as on the left side of \ref{fixed pts isom}, define $\Phi(F)$ to be $F(\mathscr UI)$.  This is well-defined by $\G$-equivariance.  On an arrow $\theta\colon F\to G$ on the left side of \ref{fixed pts isom}, define $\Phi(\theta)$ to be the component 
\[ \theta_{\mathscr UI}\colon F(\mathscr UI)\to G(\mathscr UI).
\] 
This is well-defined by the compatibility condition assumed for $\theta$.  This $\Phi$ is a functor by the definition of composition of natural transformations and since $\G$ is a strict 2-group.  On the other hand, starting with $X\in \X_0$ fixed by $\mathscr U$, take $\Psi$ to be the functor $F_X\colon \G/\mathscr U\to \X$ given by 
\[ F_X(\mathscr U_0A) := X\cdot A 
\]
on objects and on arrows by
\[  F_X(\mathscr U_1g) := 1_X\cdot g.
\]
These assignments for $F_X$ are well-defined by the assumption that $X$ is $\mathscr U$-fixed.  Moreover, $F_X$ is a functor by the fact that the action of $\G$ on $\X$ is a bifunctor.  For an arrow $m\colon X\to Y$ of $\X$ fixed by $\mathscr U$, define what will be a transformation $m\colon F_X\to F_Y$ by taking the component at $\mathscr U_0A$ to be
\[ m_{\mathscr U_0A} = m\cdot 1_A.
\]
This is well-defined again because $m$ is assumed to be $\mathscr U$-fixed.  Naturality follows since the action of $\G$ is a bifunctor.  Finally, $\Psi$ is a functor by the bifunctor condition and the definition of composition of natural transformations.

Finally, the functors $\Phi$ and $\Psi$ are mutually inverse.  This is a straightforward computation from the definitions.  For example, at the object-level, consider
\[ \Psi\Phi(F) = \Psi(F(\mathscr U_0I)) = F_X
\]
with $X = F(\mathscr U_0I)$.  But compute that 
\[ F_X(\mathscr U_0A) = X\cdot A = F(\mathscr U_0I)A = F(\mathscr U_0I\otimes A) = F(\mathscr U_0A)
\]
by definition and equivariance of $F$.  A similar argument shows that $F$ and $F_X$ agree on arrows of $\mathscr G/\mathscr U$.  Thus, $\Psi\Phi$ is identity on objects.  The compatibility condition for transformations of equivariant functors shows that it is also identity on morphisms.  That is, if $\theta$ is a transformation of $\G$-equivariant functors $F,G\colon \G/\mathscr U\rightrightarrows \X$, then by the definition of the induced transformation and the compatibility condition, it follows that 
\[ (\theta_{\mathscr U_0I})_{\mathscr U_0A} = \theta_{\mathscr U_0I}\cdot 1_{\mathscr U_1A} = \theta_{\mathscr U_0I\otimes A} = \theta_{\mathscr U_0A}
\] 
as required.  On the other hand, the argument that $\Phi\Psi =1$ is similar but easier.  The result is now asserted formally in the following

\begin{prop} \label{fixed point iso prop} For a topological 2-group $\G$, open sub-2-group $\mathscr U\subset \G$, and a discrete 2-space $\X$ on which $\G$ acts on the right, there is an isomorphism of categories 
\[
\mathfrak B\G(\G/\mathscr U,\X)\cong \X^{\mathscr U}
\]
where $\mathfrak B\G$ denotes the 2-category of continuous right actions of $\G$ and $\X^{\mathscr U}$ denotes the category of $\mathscr U$-fixed points under the continuous action on $\X$.
\end{prop}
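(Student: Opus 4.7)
The plan is to construct explicit functors
$\Phi\colon \mathfrak B\G(\G/\U,\X)\rightleftarrows \X^{\U}\colon \Psi$
and verify they are mutually inverse. I would take $\Phi$ to be evaluation at the distinguished coset $\U_0 I$: a $\G$-equivariant functor $F\colon \G/\U\to \X$ is sent to $F(\U_0 I)\in \X_0$, and a transformation $\theta\colon F\Rightarrow G$ satisfying the action 2-cell compatibility is sent to its component $\theta_{\U_0 I}$. The first thing to check is that the image lands in $\X^{\U}$. For objects: if $A\in \U_0$ then $\U_0 A = \U_0 I$ in $\G/\U$, so $\G$-equivariance gives $F(\U_0 I)\cdot A = F(\U_0 I\otimes A) = F(\U_0 A) = F(\U_0 I)$, and similarly at the arrow level. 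For transformations, the action 2-cell compatibility condition supplies the corresponding equations making $\theta_{\U_0 I}$ a morphism of $\X^{\U}$.

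Conversely, I would define $\Psi$ on $X\in \X^{\U}_0$ by $\Psi(X) = F_X$ where $F_X(\U_0 A) = X\cdot A$ on objects and $F_X(\U_1 g) = 1_X\cdot g$ on arrows, and on a morphism $m\colon X\to Y$ of $\X^{\U}$ by the transformation with components $m_{\U_0 A} = m\cdot 1_A$. The key well-definition point is that if $\U_0 A = \U_0 B$ then $A\otimes B^{-1}\in \U_0$, and $\U$-fixity of $X$ gives $X\cdot A = X\cdot B$; the same pattern handles arrows and 2-cells using $\U$-fixity of $1_X$ and of $m$. Functoriality of $F_X$, naturality of the induced transformations, and $\G$-equivariance together with the action 2-cell compatibility all reduce to the bifunctor condition for $M\colon \X\times\G\to\X$ in Definition \ref{action} combined with the interchange law (\ref{interchange}) in $\G$.

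Finally, I would verify $\Phi\Psi = 1$ and $\Psi\Phi = 1$. The first direction is immediate by unwinding: $\Phi(F_X) = F_X(\U_0 I) = X\cdot I = X$, and the arrow case is analogous. For $\Psi\Phi = 1$, given $F$ set $X = F(\U_0 I)$; then $F_X(\U_0 A) = X\cdot A = F(\U_0 I)\cdot A = F(\U_0 I\otimes A) = F(\U_0 A)$ by equivariance, with the same check on arrows, and the 2-natural compatibility condition matches the transformation components. The main obstacle I anticipate is keeping the tower of well-definition and compatibility checks organized — in particular, verifying that $F_X$ respects cosets at the arrow level, that the induced 2-cells assemble into genuine natural transformations, and that all of this is compatible with the $\G$-action — but each such check reduces mechanically to (\ref{interchange}), the bifunctor axiom of Definition \ref{action}, and the defining fixity equations of $\X^{\U}$ from Example \ref{fixed points}.
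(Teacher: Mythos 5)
Your proposal is correct and follows essentially the same route as the paper: evaluation at the coset $\U_0 I$ in one direction, the induced functor $F_X(\U_0 A) = X\cdot A$ (with transformation components $m\cdot 1_A$) in the other, with well-definition resting on $\U$-fixity and the mutual-inverse check reducing to equivariance and the bifunctor condition. The only difference is cosmetic: you make explicit the verification that $\Phi$ lands in $\X^{\U}$, which the paper leaves implicit under ``well-defined by $\G$-equivariance.''
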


For each $\X$ in $\mathfrak B\G$, the assignment $\G/\U\mapsto \mathfrak B\G(\G/\U,\X)$ extends to a 2-functor $\mathfrak S(\G)^{op}\to\mathfrak{Cat}$.

\begin{cor} \label{fixed points is a functor on orbits} The assignment on objects 
\begin{equation} \label{colimit 2-functor} \G/\U \mapsto (\lim_{\mathclap{\substack{\to \\\V }}} F(\G/\V))^{\U}
\end{equation}
extends to a well-defined contravariant 2-functor on the orbit 2-category $\mathfrak S(\G)$.  Additionally, the fixed point isomorphism of Proposition \ref{fixed point iso prop}, namely,
\begin{equation} \label{naturality colimit iso} \mathfrak B(\G/\U,\lim_{\mathclap{\substack{\to \\\V }}} F(\G/\V)) \cong (\lim_{\mathclap{\substack{\to \\\V }}} F(\G/\V))^{\U}
\end{equation}
is a 2-natural isomorphism of 2-functors $\mathfrak S(\G)^{op}\to\mathfrak{Cat}$.  In other words, the colimit 2-functor given by \ref{colimit 2-functor} is representable.
\end{cor}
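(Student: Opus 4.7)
The plan is to construct the 2-functor asserted in the corollary by transporting the representable hom-2-functor on the left-hand side of \ref{naturality colimit iso} across the componentwise isomorphism of Proposition \ref{fixed point iso prop}, and then to identify the transported data with the natural formulas $y \mapsto y\cdot A$ on objects and $g \mapsto 1_y \cdot g$ on 2-cells.

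First I would observe that by Lemma \ref{action on colimit}, the colimit category $\X := \lim_{\V} F(\G/\V)$ is an object of $\mathfrak B\G$. The comments immediately following Construction \ref{underlying 2-cat construction} exhibit an evident 2-functor $\iota\colon \mathfrak S(\G) \to \mathfrak B\G$ sending each $\G/\U$ to itself as a continuous $\G$-space, each morphism to the corresponding equivariant functor, and each 2-cell to a compatible equivariant transformation. The ordinary representable $\mathfrak B\G(-,\X)\colon \mathfrak B\G^{op}\to\mathfrak{Cat}$ composed with $\iota^{op}$ then yields a 2-functor $H\colon \mathfrak S(\G)^{op}\to\mathfrak{Cat}$ with $H(\G/\U) = \mathfrak B\G(\G/\U,\X)$.

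Next, Proposition \ref{fixed point iso prop} supplies at each $\G/\U$ an isomorphism $\Phi_{\U}\colon H(\G/\U) \cong \X^{\U}$. Declare the assignment \ref{colimit 2-functor} to be the 2-functor obtained by transport of structure along $\Phi$: for a morphism $A\colon \G/\U\to\G/\V$ of $\mathfrak S(\G)$, set the induced functor $\X^{\V}\to\X^{\U}$ to be $\Phi_{\U}\circ H(A)\circ \Phi_{\V}^{-1}$, and analogously on 2-cells. By construction the family $\Phi$ is then a 2-natural isomorphism from $H$ to $\X^{(-)}$, which is precisely the content of \ref{naturality colimit iso} and simultaneously establishes that the colimit 2-functor is representable by $\X$ in $\mathfrak B\G$. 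To see that this transport agrees with the expected formulas, unpack the definitions from the proof of Proposition \ref{fixed point iso prop}: $\Phi_{\U}$ sends an equivariant $F\colon \G/\U\to\X$ to $F(\U_0 I)$, while $\Phi_{\V}^{-1}(y) = F_y$ is characterized by $F_y(\V_0 B) := y\cdot B$. Then $H(A)(F_y) = F_y\circ A$ evaluated at $\U_0 I$ is $F_y(\V_0 A) = y\cdot A$, identifying the transported action on objects with $y\mapsto y\cdot A$; a parallel computation using horizontal composites identifies the action on 2-cells with $g\mapsto 1_y\cdot g$.

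The principal technical obstacle is verifying that these natural formulas genuinely land in the fixed-point subcategory $\X^{\U}$, since without this the diagram chases would be formally meaningless. For a morphism $A\colon \G/\U\to\G/\V$ and a $\V$-fixed object $y$, the containment $\U_0\subset A^{-1}\V_0 A$ from \ref{well-defn cond1} gives $A\otimes U\otimes A^{-1}\in\V_0$ for each $U\in\U_0$, whence $(y\cdot A)\cdot U = y\cdot(A\otimes U\otimes A^{-1})\cdot A = y\cdot A$; the analogous check at the arrow level uses the inclusion on $\U_1$. For a 2-cell $g\colon A\Rightarrow B$ one verifies that the component $1_y\cdot g$ is $\U$-fixed using the well-definition condition \ref{well-definition 2-cells orbit cat} together with the interchange law \ref{interchange}. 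Once these verifications are in hand, the 2-functor axioms for \ref{colimit 2-functor} and the 2-naturality of $\Phi$ follow formally from transport of structure, and representability is immediate from the 2-natural isomorphism $\mathfrak B\G(-,\X)\circ\iota^{op} \cong \X^{(-)}$.
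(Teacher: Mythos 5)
Your proposal is correct, but it organizes the argument differently from the paper. The paper builds the 2-functor structure on $\G/\U\mapsto\X^{\U}$ directly: for a morphism $A\colon\G/\U\to\G/\W$ it defines the transition functor by the explicit formula $[\V,X]\mapsto[A^{-1}\V A,\,X\cdot A]$ (your $y\mapsto y\cdot A$, using the action of Lemma \ref{action on colimit}), and for a 2-cell $g\colon A\Rightarrow B$ it takes the component at $[\V,X]$ to be $[\mathscr O,g_X]$ with $\mathscr O=A^{-1}\V A\cap B^{-1}\V B$ (which agrees with your $1_y\cdot g$ under the arrow-level action \ref{action arrow assign}); it then deduces functoriality and naturality from the 2-functor axioms for $F$ and verifies 2-naturality of the fixed-point isomorphism as a separate, final step. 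You instead obtain the 2-functor by transport of structure along the componentwise isomorphisms of Proposition \ref{fixed point iso prop} applied to $\mathfrak B\G(-,\X)$ restricted along the evident inclusion $\mathfrak S(\G)\to\mathfrak B\G$ noted after Construction \ref{underlying 2-cat construction}, so that 2-functoriality, 2-naturality, and representability come for free, and the work is relocated to identifying the transported assignments with the explicit formulas --- a computation you carry out correctly, including the check (automatic under transport, but worth making explicit since it is where the containments \ref{well-defn cond1} and condition \ref{well-definition 2-cells orbit cat} actually enter) that $y\cdot A$ and $1_y\cdot g$ land in $\X^{\U}$. In short, your route buys the naturality statements by definition at the price of an unpacking computation, while the paper's route gives the formulas up front at the price of direct verification; both rest on the same ingredients, namely Proposition \ref{fixed point iso prop} and Lemma \ref{action on colimit}.
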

\begin{proof}  For a given morphism $A\colon \G/\U\to \G/\W$ in $\mathfrak S(\G)$, define the transition functor
\[ [-]\cdot A\colon  (\lim_{\mathclap{\substack{\to \\\V }}} F(\G/\V))^{\W} \to (\lim_{\mathclap{\substack{\to \\\V }}} F(\G/\V))^{\U}
\]
by taking $[\V,X]\mapsto [A^{-1}\V A,X\cdot A]$ on objects and analogously on arrows.  This is well-defined by the containment $\U\subset A^{-1}\V A$.  It is a functor by the 2-functor axioms for $F$ and the definition of composition in the colimit.  For a 2-cell $g\colon A\Rightarrow B$ in $\mathfrak S(\G)$, define what will be a natural transformation 
$$\begin{tikzpicture}
\node(1){$\displaystyle (\lim_{\mathclap{\substack{\to \\\V }}} F(\G/\V))^{\U}$};
\node(2)[node distance=1in, right of=1]{$\Downarrow [-]\cdot g$};
\node(3)[node distance=1in, right of=2]{$\displaystyle (\lim_{\mathclap{\substack{\to \\\V }}} F(\G/\V))^{\U}$};
\draw[->,bend left=60](1) to node [above]{$[-]\cdot A$}(3);
\draw[->,bend right=60](1) to node [below]{$[-]\cdot B$}(3);
\end{tikzpicture}$$ 
by taking the component at $[\V,X]$ to be the morphism
\[ [\mathscr O,g_X]\colon [A^{-1}\V A, X\cdot A]\longrightarrow [B^{-1}\V B,X\cdot B]
\]
where $\mathscr O = A^{-1}\V A\cap B^{-1}\V B$ and $g_X$ is the $X$-component of the transformation $Fg\colon FA\Rightarrow FB$ associated to $g$ under $F$.  This is natural by the functoriality of $F$ at the level of 2-cells.  The rest of the 2-functor axioms follow by the fact that $F$ is a 2-functor.

The isomorphism in \ref{naturality colimit iso} is 2-natural in $\G/\U$ by the definition of the transition morphisms and 2-cell assignment for the colimit 2-functor and $\G$-equivariance.  \end{proof}

Recall that $\mathbf L(\G)$ denotes the poset category of open sub-2-groups of $\G$.  The last technical result is an analogue of \ref{equiv iso 1}, namely, an isomorphism
\begin{equation}
\label{category is a colimi} \lim_{\mathclap{\substack{\to \\ \mathscr U}}}\X^{\mathscr U} \cong \X
\end{equation}
whenever $\X$ admits a continuous right action of $\G$.  On the one hand, given an object $[\mathscr U,X]$ in the colimit on the left of \ref{category is a colimi}, declare $\Phi[\mathscr U,X] =X$.  This is well-defined by the definition of $\mathscr X^{\mathscr U}$ as a functor on $\mathbf L(\G)$.  If $[\mathscr W,m]$ is a morphism of the colimit, take its image under $\Phi$ to be $m$.  Well-definition follows for the same reason.  Evidently $\Phi$ is a functor.  On the other hand, given $X$ in $\X$, take the image under $\Psi$ to be $[\stab(X),X]$.  Given a morphism $m\colon X\to Y$, take $\Phi(m)$ to be $[\stab(m),X]$.  This is well-defined by the definition of the equivalence classes.  Again $\Psi$ is a functor.

It is immediate that $\Phi\Psi = 1$ holds, as can be seen by just chasing objects and arrows through the definitions.  On the other hand, $\Psi\Phi=1$ holds essentially by the fact that if $X\in\X$ is $\mathscr U$-fixed, then $\mathscr U\subset \stab(X)$ as sub-2-groups, which implies that $[\stab(X),X]= [\mathscr U,X]$ holds too.  A similar argument works at the level of morphisms.  Each of the isomorphisms is natural in $\X$, as can be seen directly by computation from the definitions.  The result, then, is stated formally as

\begin{prop} \label{counit iso} Whenever $\X$ is a category admitting a continuous right action of a topological 2-group $\G$, there is an isomorphism of categories
\[\lim_{\mathclap{\substack{\to \\ \mathscr U}}}\X^{\mathscr U} \cong \X
\]
where the colimit on the left is taken over $\mathbf L(\G)$, the poset of open sub-2-groups of $\G$.  Additionally the isomorphism is natural in $\X$.
\end{prop}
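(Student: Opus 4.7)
The plan is to construct a pair of mutually inverse functors explicitly, with the inverse built out of the stabilizer sub-2-groups of Example \ref{stabilizer}. I would define $\Phi\colon \lim_{\to}\X^{\U}\to\X$ on objects by $[\U,X]\mapsto X$ and on morphisms by $[\W,m]\mapsto m$. Well-definition on equivalence classes follows from the fact that the transition functors $\X^{\V}\to\X^{\U}$ in the diagram $\U\mapsto\X^{\U}$ are subcategory inclusions, so the filtered-colimit relation identifies exactly those pairs whose representatives agree as elements of $\X$. Conversely, I would define $\Psi\colon \X\to\lim_{\to}\X^{\U}$ by $X\mapsto[\stab(X),X]$ on objects and $m\mapsto [\stab(m),m]$ on morphisms. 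These assignments make sense because $\stab(X)$ and $\stab(m)$ are open sub-2-groups by Example \ref{stabilizer}, and by construction $X$ is $\stab(X)$-fixed and $m$ is $\stab(m)$-fixed.

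The functoriality of both $\Phi$ and $\Psi$ is immediate: compositions and identities in the colimit are inherited from $\X$, and the stabilizer of a composite or identity morphism contains the intersection of the relevant stabilizers, which is sufficient to land in a common $\W$ for the equivalence relation. To see that $\Phi$ and $\Psi$ are mutually inverse, note first that $\Phi\Psi = 1_\X$ is immediate by inspection. The identity $\Psi\Phi = 1$ on the colimit reduces to the key observation that whenever $X\in\X^{\U}$ the containment $\U\subset\stab(X)$ holds by the very definition of the stabilizer; choosing $\W=\U$ witnesses $(\U,X)\sim(\stab(X),X)$ in the colimit relation, giving $[\U,X]=[\stab(X),X]$. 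The analogous check for morphisms proceeds via $\U\subset\stab(m)$ whenever $m\in(\X^{\U})_1$.

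For naturality in $\X$, I would observe that any $\G$-equivariant functor $H\colon\X\to\mathscr Y$ satisfies $H(\X^{\U})\subset\mathscr Y^{\U}$, and moreover that $\stab(X)\subset\stab(HX)$ and $\stab(m)\subset\stab(Hm)$ by equivariance; compatibility of $\Phi$ and $\Psi$ with $H$ then reduces to inspection of these containments via the equivalence relation.

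The step I expect to be the main obstacle is verifying that the colimit relation really collapses \emph{only} to equality of underlying points in $\X$, rather than identifying more than one should. This reduces to checking that $\U\subset\V$ induces a subcategory inclusion $\X^{\V}\hookrightarrow\X^{\U}$, which in turn requires carefully unpacking the $\G$-action at the level of both objects and morphisms of $\X$ and using that the restriction to a smaller sub-2-group can only enlarge the fixed subcategory. Once this injectivity is established, the remaining identifications in the proof of $\Psi\Phi=1$ are forced, and the naturality argument of the last paragraph follows cleanly.
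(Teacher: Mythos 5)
Your proposal is correct and matches the paper's own argument essentially verbatim: the same $\Phi$ sending $[\U,X]\mapsto X$, the same $\Psi$ built from stabilizers, the same observation that $\U\subset\stab(X)$ forces $[\stab(X),X]=[\U,X]$, and naturality by equivariance. The extra care you take with the transition functors being inclusions is a reasonable elaboration of what the paper leaves implicit, but it is not a different route.
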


\subsection{The 2-Equivalence}

A 2-functor
\begin{equation} \label{Big Phi} \Phi\colon \mathfrak B\G\to [\mathfrak S(\G)^{op},\mathfrak{Cat}]
\end{equation}
is given by taking $\X$ with continuous right action $M\colon \X\times \G\to\X$ to the representable 2-functor
\[ \mathfrak B\G(-,\X)\colon \mathfrak S(\G)^{op}\to\mathfrak{Cat}.
\]
The assignments on morphisms and 2-cells are given by pushing forward.  Well-definition of each assignment follows by the associativity of composition of functors and transformations.  On the other hand, a 2-functor
\begin{equation} \label{Big Psi} \Psi\colon [\mathfrak S(\G)^{op},\mathfrak{Cat}] \to \mathfrak B\G
\end{equation}
can be given by taking a 2-functor $F$ to the pseudo-colimit
\[\lim_{\mathclap{\substack{\to \\ \mathscr U}}} F(\G/\mathscr U) 
\]
taken over the poset $\mathbf L(\G)$ of open sub-2-groups $\mathscr U \subset \G$ as in Proposition \ref{colimit prop}.  Lemma \ref{action on colimit} shows that $\Psi$ is well-defined. If $\theta\colon F\Rightarrow G$ is a 2-natural transformation of 2-functors $\mathfrak S(\G)^{op}\to\mathfrak{Cat}$, then take $\Psi(\theta)$ to be the functor
\[ \Psi(\theta)\colon \lim_{\mathclap{\substack{\to \\ \U}}}F(\G/\U) \to \lim_{\mathclap{\substack{\to \\ \U}}}G(\G/\U)
\] 
given by
\[ \Psi(\theta)([\U,X]):= [\U,\theta_{\G/\U}(X)]
\] 
on objects and on morphisms by
\[ \Psi(\theta)([\U,m]): = [\U,\theta_{\G/\U}(m)].
\]
These assignments are well-defined by the 2-naturality of $\theta$.  And $\Psi(\theta)$ is $\G$-equivariant by the definition of the action of $\G$ on the colimits and 2-naturality of $\theta$.  Finally, the colimit definition of $\Psi$ on objects has enough structure to handle a 2-cell assignment as well.  For a modification $m\colon \theta \Rrightarrow \gamma$ of 2-natural transformations $\theta$ and $\gamma$, take $\Psi(m)$ to have components
\[ [\U,m_{\G/\U,X}] \colon [\U,\theta_{\G/\U}(X)] \to [\U,\gamma_{\G/\U}(X)].
\]
Indexed over all such $[\U,X]$, this definition comprises a natural transformation since each $m_{\G/\U}$ is natural.  The compatibility condition for 2-cells in $\mathfrak B(\G)$ follows by the modification condition for $m$.  From the definitions of the assignments and the definition of composition, it follows that $\Psi$ is a 2-functor.

The goal is to show that $\Phi$ and $\Psi$ of \ref{Big Phi} and \ref{Big Psi} are mutually pseudo-inverse when restricted to 2-sheaves.  This will exhibit the desired 2-equivalence yielding the main result of the paper.  First develop the counit.  This is always an isomorphism.

\begin{prop} In the notation of the discussion above, $1\cong \Psi\Phi$ holds 2-naturally in $\X\in \mathfrak B\G$. 
\end{prop}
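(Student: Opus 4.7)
The plan is to build the desired 2-natural isomorphism by composing two isomorphisms already in hand. Given $\X \in \mathfrak B\G$, unfolding the definition gives
\[
\Psi\Phi(\X) = \lim_{\mathclap{\substack{\to \\ \U}}}\, \mathfrak B\G(\G/\U,\X).
\]
Proposition \ref{fixed point iso prop} identifies $\mathfrak B\G(\G/\U,\X)$ with the fixed-point category $\X^{\U}$, and Corollary \ref{fixed points is a functor on orbits} (more precisely, the 2-naturality of \ref{naturality colimit iso} restricted to $\mathbf L(\G)^{op}$) ensures this identification is natural in $\U$, so it passes to the colimit. Proposition \ref{counit iso} then identifies $\lim_{\to} \X^{\U}$ with $\X$ itself. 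The composite yields a candidate isomorphism of categories $\epsilon_\X\colon \Psi\Phi(\X) \cong \X$, or equivalently its inverse $\eta_\X\colon \X\cong \Psi\Phi(\X)$.

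The first verification I would carry out is that $\epsilon_\X$ is $\G$-equivariant, so that it lives in $\mathfrak B\G$. Tracing the definitions, a class $[\U, F]$ with $F\colon \G/\U\to\X$ equivariant maps to $F(\U_0 I)$, and the action $[\U,F]\cdot A$ in the colimit, defined by \ref{action obj assign} together with the transition functors of Corollary \ref{fixed points is a functor on orbits}, sends this to $F(\U_0 I)\cdot A$ by $\G$-equivariance of $F$. The corresponding check on arrows is the same computation applied termwise and uses the definition \ref{action arrow assign}, matching precisely the bifunctoriality of the action $M\colon\X\times\G\to\X$.

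Second, I would verify 2-naturality in $\X$. For a morphism $H\colon \X\to\mathscr Y$ of $\mathfrak B\G$, the needed square commutes because pushforward by $H$ on each $\mathfrak B\G(\G/\U,-)$ corresponds, under Proposition \ref{fixed point iso prop}, to termwise application of $H$ on $\X^{\U}$, and the counit of Proposition \ref{counit iso} is manifestly natural in $\X$. For a 2-cell $\theta\colon H\Rightarrow K$ in $\mathfrak B\G$, one must check that the modification produced by $\Psi\Phi(\theta)$, whose component at $[\U,F]$ is $[\U,\theta\ast 1_F]$, corresponds under $\epsilon$ to the transformation $\theta$ itself. At the representative level this reduces to the equation $(\theta\ast 1_F)_{\U_0 I} = \theta_{F(\U_0 I)}$, which is just the formula for horizontal composition of a natural transformation with a functor and is compatible with the colimit relation by the Action 2-Cell Compatibility.

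The main obstacle is this last 2-cell verification: one must carefully track how horizontal pasting with $\theta$ and the colimit equivalence relation interact, in particular ensuring that the representative-level identities respect choices of open sub-2-groups $\mathscr W\subset \U\cap \V$ used in the colimit construction. Everything is forced by Action 2-Cell Compatibility, equivariance, and the bifunctor condition for $\otimes\colon\G\times\G\to\G$; but the bookkeeping to assemble these pointwise identities into the modification condition for $\Psi\Phi(\theta)$ is where genuine care is needed. Once this is in place, the full 2-naturality of $\epsilon$ follows, and the statement is established.
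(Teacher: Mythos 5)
Your proposal is correct and follows essentially the same route as the paper: compose the fixed-point isomorphism of Proposition \ref{fixed point iso prop} with the colimit isomorphism of Proposition \ref{counit iso}, then verify $\G$-equivariance and 2-naturality of the composite. The only divergence is one of emphasis: the paper's sole displayed computation is the arrow-level equivariance identity $(\theta\ast g)_{\mathscr O_0 I}=\theta_{\U_0 I}\cdot g$ (derived from the definition of horizontal composition, the compatibility condition for $\theta$, equivariance of the representative, and bifunctoriality of the action), which you compress into ``the same computation applied termwise,'' whereas the 2-cell naturality in $\X$ that you flag as the main obstacle is exactly the part the paper dismisses as immediate from the formula \ref{horiz comp in cat} for horizontal composition.
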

\begin{proof} By combining Propositions \ref{fixed point iso prop} and \ref{counit iso}, already established is that there is such an isomorphism of categories for each $\X$ in $\mathfrak B\G$, namely,
\[  \Psi\Phi(\X) = \lim_{\mathclap{\substack{\to \\ \mathscr U}}}\mathfrak B\G(\G/\U,\X) \cong \lim_{\mathclap{\substack{\to \\ \mathscr U}}} \X^{\U}\cong \X.
\] 
But additionally this is a 2-natural isomorphism in $\mathfrak B\G$.  That it is 2-natural is easy to see since the value of $\Psi\Phi(\X)$ is just the colimit over a family of representable 2-functors.  The 2-cell condition for 2-naturality in particular follows by the definition of horizontal composition of natural transformations \ref{horiz comp in cat}.  The work consists in showing that the isomorphism above is $\G$-equivariant.  That is, to be established is that the diagram
$$\begin{tikzpicture}
\node(1){$\displaystyle \lim_{\mathclap{\substack{\to \\ \mathscr U}}}\mathfrak B\G(\G/\U,\X)\times \G$};
\node(2)[node distance=1.6in, right of=1]{$\X\times \G$};
\node(3)[node distance=.8in, below of=1]{$\displaystyle \lim_{\mathclap{\substack{\to \\ \mathscr U}}}\mathfrak B\G(\G/\U,\X)$};
\node(4)[node distance=.8in, below of=2]{$\X$};
\node(5)[node distance=.5in, right of=1]{$$};
\node(6)[node distance=.4in, below of=5]{$$};
\draw[->](1) to node [above]{$\cong$}(2);
\draw[->](1) to node [left]{$\mathrm{act}$}(3);
\draw[->](2) to node [right]{$\mathrm{act}$}(4);
\draw[->](3) to node [below]{$\cong$}(4);
\end{tikzpicture}$$
commutes.  At the level of objects, this follows because a representative $H\colon \G/\U\to \X$ of a class in the colimit is itself $\G$-equivariant.  On the other hand, let $(\theta\colon H\Rightarrow K, g\colon A\Rightarrow B)$ denote any arrow of the upper-left corner of the square.  Chasing it around each side of the square and comparing, it follows that we want to show there is an equality
\[  (\theta\ast g)_{\mathscr O_0I} = \theta_{\U_0I}\cdot g
\]
where the `$\ast$' denotes horizontal composition of natural transformations \ref{horiz comp in cat} and $\mathscr O$ denotes the intersection $A^{-1}\U A\cap B^{-1}\U B$ arising in the definition of the action of $\G$ on the colimit.  But that this equality holds is established by the following computation:
\begin{align}
(\theta\ast g)_{\mathscr O_0I} &= \theta_{\U_0B}\circ Hg_{\mathscr O_0I}  \qquad & \text{(def'n horiz comp \ref{horiz comp in cat})} \notag \\
&= \theta_{\U_0B}\circ H\U_1g  \qquad &\text{(def'n components of $g$ \ref{2-cell cond for 2-orbit cat})} \notag \\
&= (\theta_{\U_0I}\cdot 1_B)\circ H\U_1g  \qquad &\text{(compat for $\theta$ \ref{action 2-cell compat})} \notag \\
&= (\theta_{\U_0I}\cdot 1_B)\circ H(1_{\U_0I}\cdot g)  \qquad &\text{(def'n action of $g$ \ref{action on 2-orbit arr})} \notag \\
&= (\theta_{\U_0I}\cdot 1_B)\circ (1_{H\U_0I}\cdot g)  \qquad &\text{($H$ is $\G$-equivariant)} \notag \\
&= \theta_{\U_0I}\cdot g \qquad &\text{(action of $\G$ on $\X$ is a bifunctor)} \notag
\end{align}
Therefore, $1\cong \Psi\Phi$ holds 2-naturally in $\mathfrak B\G$, as asserted.  
\end{proof}

Now, we shall see that $F\cong \Phi\Psi(F)$ holds 2-naturally for 2-sheaves $F$.  First a preliminary result that constructs the unit.  In the statement, note that Proposition \ref{fixed point iso prop} has been used in calculating $\Phi\Psi(F)$.

\begin{lemma} \label{unit of 2-equivalence} For each 2-presheaf $F$ on $\mathfrak S(\G)$, there is a 2-natural transformation $\eta\colon F \Rightarrow \Phi\Psi(F)$ with components
\[ \eta \colon F(\G/\U) \longrightarrow  (\lim_{\mathclap{\substack{\to \\\V }}} F(\G/\V))^{\U}
\]
given on objects by sending $X\mapsto [\U,X]$.  These assemble into a 2-natural transformation $\eta\colon 1\Rightarrow \Phi\Psi$.
\end{lemma}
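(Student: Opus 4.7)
The plan is to define each component $\eta_{\G/\U}\colon F(\G/\U) \to (\lim_{\to} F(\G/\V))^{\U}$ as a functor, then verify the two axioms of 2-naturality. On objects send $X \mapsto [\U, X]$ as prescribed; on arrows $m\colon X \to Y$ of $F(\G/\U)$, send $m \mapsto [\U, m]$. Source, target, identities, and composition are preserved immediately by the construction of the colimit category in \ref{colimit cat}.

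The crucial well-definition point is that the image actually lies in the $\U$-fixed subcategory. For any $U \in \U_0$, the formula \ref{action obj assign} gives $[\U, X] \cdot U = [U^{-1}\U U, F(U)(X)] = [\U, F(U)(X)]$. The key observation is that $U$ and the unit $I$, viewed as morphisms $\G/\U \to \G/\U$ of $\mathfrak S(\G)$, induce the same functor on cosets, since $\U_0(U \otimes X) = \U_0 X$ for every $X$; hence they represent the same morphism of $\mathfrak S(\G)$, so by 2-functoriality $F(U) = F(I) = \mathrm{id}$ and $[\U, X] \cdot U = [\U, X]$. A parallel computation using the 2-cell formula \ref{2-cell cond for 2-orbit cat} shows that $1_{[\U, X]}$ is fixed by $\U_1$ and, more generally, that $[\U, m]$ is a morphism in $(\lim_{\to} F)^{\U}$.

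For 2-naturality at the 1-cell level, given a morphism $A\colon \G/\U \to \G/\V$ in $\mathfrak S(\G)$, I chase an object $X \in F(\G/\V)$ around the naturality square: one route produces $[\U, F(A)(X)]$, while the other, after applying \ref{action obj assign}, produces $[A^{-1}\V A, F(A)(X)]$, where this time $A$ is regarded as a morphism $\G/A^{-1}\V A \to \G/\V$. These two classes coincide in $\lim_{\to} F$ because for any $\W \subset \U$ their images along $F(I_{\W})$ both compute $F$ applied to the single morphism $A\colon \G/\W \to \G/\V$ of $\mathfrak S(\G)$, by 2-functoriality of $F$ and the tensor-based definition of composition in $\mathfrak S(\G)$. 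The argument for arrows of $F(\G/\V)$ is identical. For the 2-cell clause with $g\colon A \Rightarrow B$ in $\mathfrak S(\G)$, I unfold the definition \ref{action arrow assign} of the action of $g$ on the colimit, involving the intersection $\mathscr O = A^{-1}\V A \cap B^{-1}\V B$ and the natural transformation $Fg$; both composite 2-cells in the 2-natural compatibility condition then agree at each $X$ because both represent the component $(Fg)_X$, witnessed in $F(\G/\mathscr O)$.

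The main obstacle is the 2-cell clause of 2-naturality: it requires correctly threading together the definition of the colimit action on 2-cells, the horizontal composition formula \ref{2-cell cond for 2-orbit cat} in $\mathfrak S(\G)$, and the 2-functoriality of $F$, in order to reconcile both composites of the compatibility diagram. The remaining verifications reduce to routine bookkeeping against the colimit construction, resting on the fact that coset-equal elements of $\G_0$ represent the same morphism of $\mathfrak S(\G)$.
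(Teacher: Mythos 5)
Your proposal is correct and follows essentially the same route as the paper: define the components on objects and arrows, check the naturality square using the containment $\U\subset A^{-1}\V A$ together with 2-functoriality of $F$ and the tensor definition of composition in $\mathfrak S(\G)$, and verify the 2-cell clause by unwinding the transition 2-cells of Corollary \ref{fixed points is a functor on orbits}. The one place you go beyond the paper is in actually arguing that $[\U,X]$ and $[\U,m]$ land in the $\U$-fixed subcategory (via the observation that elements of $\U_0$ and $\U_1$ induce the identity morphism and identity 2-cell of $\G/\U$ in $\mathfrak S(\G)$), a point the paper simply asserts; this is a welcome addition, not a divergence.
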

\begin{proof}  Notice that the arrow assignment $X\mapsto [\U,X]$ is well-defined since $[\U,X]$ is $\U$-fixed under the action of $\G$ on the colimit.  The arrow assignment sends $m\colon X\to Y$ to $[\U,m]$ and is similarly well-defined and functorial.  Now, to verify 2-naturality, let $A\colon \G/\U \to \G/\mathscr W$ denote a morphism of $\mathfrak S(\G)$.  Thus, in particular $\U\subset A^{-1}\mathscr WA$ holds as in \ref{well-def mor 2-orbit cat1} and \ref{well-def mor 2-orbit cat2}.  The square
$$\begin{tikzpicture}
\node(1){$F(\G/\mathscr W)$};
\node(2)[node distance=1.6in, right of=1]{$\displaystyle (\lim_{\mathclap{\substack{\to \\\V }}} F(\G/\V))^{\U}$};
\node(3)[node distance=.8in, below of=1]{$F(\G/\U)$};
\node(4)[node distance=.8in, below of=2]{$\displaystyle (\lim_{\mathclap{\substack{\to \\\V }}} F(\G/\V))^{\U}$};
\node(5)[node distance=.5in, right of=1]{$$};
\node(6)[node distance=.4in, below of=5]{$$};
\draw[->](1) to node [above]{$\eta$}(2);
\draw[->](1) to node [left]{$A$}(3);
\draw[->](2) to node [right]{$[-]\cdot A$}(4);
\draw[->](3) to node [below]{$\eta$}(4);
\end{tikzpicture}$$
commutes by the assumption that $\U\subset A^{-1}\mathscr WA$ holds.  The one compatibility condition for a 2-natural transformation also holds.  For this let $g\colon A\Rightarrow B$ denote a 2-cell of $\mathfrak S(\G)$ as in \ref{2-cell cond for 2-orbit cat} and \ref{well-definition 2-cells orbit cat}.  The equality of the composite 2-cells on the left and right sides
$$\begin{tikzpicture}
\node(1){$F(\G/\mathscr W)$};
\node(2)[node distance=1.5in, right of=1]{$\displaystyle (\lim_{\mathclap{\substack{\to \\\V }}} F(\G/\V))^{\mathscr W}$};
\node(3)[node distance=1in, below of=1]{$F(\G/\U)$};
\node(4)[node distance=1in, below of=2]{$\displaystyle (\lim_{\mathclap{\substack{\to \\\V }}} F(\G/\V))^{\U}$};
\node(5)[node distance=2in, right of=2]{$F(\G/\mathscr W)$};
\node(6)[node distance=1.5in, right of=5]{$\displaystyle (\lim_{\mathclap{\substack{\to \\\V }}} F(\G/\V))^{\mathscr W}$};
\node(7)[node distance=1in, below of=5]{$F(\G/\U)$};
\node(8)[node distance=1in, below of=6]{$\displaystyle (\lim_{\mathclap{\substack{\to \\\V }}} F(\G/\V))^{\U}$};
\node(9)[node distance=.42in, below of=1]{$g$};
\node(10)[node distance=.58in, below of=1]{$\Rightarrow$};
\node(11)[node distance=.42in, below of=6]{$-\cdot g$};
\node(12)[node distance=.58in, below of=6]{$\Rightarrow$};
\node(13)[node distance=2.5in, right of=9]{$=$};

\draw[->](1) to node [above]{$\eta$}(2);
\draw[->,bend right=40](1) to node [left]{$A$}(3);
\draw[->,bend left=40](1) to node [right]{$B$}(3);
\draw[->](2) to node [right]{$[-]\cdot B$}(4);
\draw[->](3) to node [above]{$\eta$}(4);

\draw[->](5) to node [above]{$\eta$}(6);
\draw[->](5) to node [left]{$A$}(7);
\draw[->,bend right=40](6) to node [left]{$[-]\cdot A$}(8);
\draw[->,bend left=40](6) to node [right]{$[-]\cdot B$}(8);
\draw[->](7) to node [above]{$\eta$}(8);
\end{tikzpicture}$$
follows by the constructions of Lemma \ref{fixed points is a functor on orbits}, the functoriality of $F$, and finally the definition of horizontal composition of 2-cells in $\mathfrak S(\G)$.  \end{proof}

The map $\eta$ of Lemma \ref{unit of 2-equivalence} is one-to-one on objects and faithful whenever $F$ is a 2-sheaf for the atomic topology.  This follows by the definition of the relations forming the colimit and by Lemma \ref{atomic top lemma} showing that each $FI$ is one-to-one on objects and faithful.  But in this case $\eta$ is a 2-natural isomorphism as well.  The proof of the following formal statement of the result shows surjectivity on objects and hom-wise on arrows.  This will suffice since a fully faithful and bijective-on-objects functor is an isomorphism of categories.  The argument in \S III.9 on p.154 of \cite{MM} can adapted for the proof.

\begin{lemma} \label{surjective when sheaf}  If $F\colon \mathfrak S(\G)^{op}\to\mathfrak{Cat}$ is a 2-sheaf for the atomic topology, then $\eta$ of Lemma \ref{unit of 2-equivalence} is surjective on objects.
\end{lemma}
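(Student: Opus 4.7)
My plan is to realize any object $\bar Y$ of $\bigl(\lim_{\to\V}F(\G/\V)\bigr)^\U$ as $[\U,X]$ for some $X\in F(\G/\U)$ by constructing a matching family on the atomic covering sieve generated by a canonical morphism $\G/\V\to\G/\U$, and amalgamating via the 2-sheaf property of $F$.

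First I would use the filtered character of the colimit over $\mathbf L(\G)$ to refine a given representative $\bar Y=[\V,Y]$ to one with $\V\subset\U$: replacing $\V$ by $\V\cap\U$ (still an open sub-2-group, still an object of $\mathbf L(\G)$) and $Y$ by its restriction under $F$ of the obvious inclusion leaves the class $\bar Y$ unchanged. Once $\V\subset\U$, the identity element of $\G_0$ represents a morphism $I\colon\G/\V\to\G/\U$ in $\mathfrak S(\G)$, and its singleton generates a covering sieve $S\hookrightarrow\mathbf y(\G/\U)$ in the atomic topology of \S 4.2.

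Next I would define a candidate matching family $\theta\colon S\to F$ by $\theta(\phi):=F(\psi)(Y)$ on each object $\phi\colon\G/\mathscr W\to\G/\U$ of $S(\G/\mathscr W)$, where $\phi=I\circ\psi$ is a factorization through $I$, and analogously on 2-cells using the factorization of 2-cells in the sieve described in Example \ref{atomic 2-topology}. The crux is independence of $F(\psi)(Y)$ from the choice of factorization $\psi$: two factorizations $\psi,\psi'$ of the same $\phi$ differ by (pre)composition with an element of $\U_0$, and the hypothesis that $\bar Y$ is $\U$-fixed, when unwound through the equivalence relation defining the colimit, provides precisely an equality of the corresponding translates of $Y$ after further restriction to some smaller open sub-2-group. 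Lemma \ref{atomic top lemma}, which states that $F$ applied to any covering morphism is injective-on-objects and faithful, then lifts this equality in a smaller $F(\G/\mathscr W')$ back to the required equality in $F(\G/\mathscr W)$. The analogous argument on 2-cells uses the arrow part of $\U$-fixedness and makes $\theta$ a genuine 2-natural transformation; subsequent 2-naturality in $\mathscr W$ is routine from the 2-functoriality of $F$.

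Finally, the 2-sheaf property applied to $\theta$ produces a unique extension $\tilde\theta\colon\mathbf y(\G/\U)\to F$, which by 2-Yoneda corresponds to an object $X\in F(\G/\U)$ with $F(\phi)(X)=\theta(\phi)$ for every $\phi\in S$. Taking $\phi=I$ gives $F(I)(X)=Y$, so using $\V$ itself as witness in the colimit relation yields $[\U,X]=[\V,Y]=\bar Y$, hence $\eta(X)=\bar Y$. The main obstacle is the well-definition verification in the previous paragraph---the careful chase needed to translate the colimit-level statement of $\U$-fixedness into a literal equality in $F(\G/\mathscr W)$---which parallels the 1-dimensional argument in \S III.9 of \cite{MM} but requires additional bookkeeping for the new 2-cell content of the sieve.
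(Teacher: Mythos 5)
Your proposal is correct and follows essentially the same route as the paper's proof: reduce to a representative with $\V\subset\U$, build compatible data on the sieve generated by $I\colon\G/\V\to\G/\U$ by transporting $Y$ along factorizations, verify independence of the factorization via $\U$-fixedness together with Lemma \ref{atomic top lemma}, and amalgamate by the 2-sheaf condition. The only difference is presentational (you make the reduction to $\V\cap\U$ and the final colimit-relation bookkeeping slightly more explicit), so nothing further is needed.
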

\begin{proof}  To show surjectivity on objects, take $[\V,X]$ fixed by $\U$ under the action of $\G$.  Without loss of generality, assume that $\V\subset \U$ holds, so that the singleton
\[ \lbrace I\colon \G/\V\to \G/\U\rbrace
\]
generates a covering sieve $S$ on $\G/\U$ in the atomic topology as in Example \ref{atomic 2-topology}.  The claim is that $X\in F(\G/\V)_0$ defines compatible data $X\colon S\Rightarrow F$.  For $A\colon \G/\W\to\G/\U$ in $S$ factoring through $I\colon \G/\V\to \G/\U$ by say $B\colon \G/\W\to\G/\V$, take the component
\begin{equation} \label{obj to data} X_{\G/\W}(A):= X\cdot B
\end{equation}
On a morphism $g\colon A \Rightarrow A'$ factoring through $I\colon \G/\V\to \G/\U$ by some $h\colon B\Rightarrow B'$, take
\begin{equation}
\label{arrow to data} X_{\G/\mathscr W}(g):= h_X\colon X\cdot B\to X\cdot B'
\end{equation}
where $h_X$ denotes $F(h)_X$.  These assignments are functorial and will give a 2-natural transformation $X\colon S\Rightarrow F$ by the definition of horizontal composition of natural transformations \ref{horiz comp in cat}.  

However, \ref{obj to data} and \ref{arrow to data} need to be seen to be well-defined.  So, assume that $A$ factors through $I$ by another map say $C\colon \G/\W\to\G/\V$.  The question is whether $X\cdot B = X\cdot C$ holds.  For this, note that the assumption means that $C$ and $B$ define the same map $\G/\W\to \G/\U$, which implies that $C^{-1}\otimes B\in\U_0$.  Since $[\V,X]$ is $\U$-fixed, this means -- writing $D= C^{-1}\otimes B$ for readability --  that 
\[ [D^{-1}\V D,X\cdot D]=[\V,X]
\]
holds in the colimit.  But now from the definition of the relation and the fact that each $FI$ is one-to-one on objects by Lemma \ref{atomic top lemma}, it follows that $X\cdot B = X\cdot C$ does hold, proving well-definition.  The argument that \ref{arrow to data} is well-defined is similar.

Thus, by the 2-sheaf condition \ref{2-sheaf isomor}, there is an extension of the data $X\colon S\Rightarrow F$ along the inclusion $S\to \mathfrak S(\G)(-,\G/\U)$.  Its value at $I\colon \G/\U \to \G/\U$ is the required element of $F(\G/\U)$ mapping to $[\V,X]$ under $\eta$.  Consequently, $\eta$ is a bijection on objects.\end{proof}

\begin{remark}  Notice that there is no reason to believe that $I\colon \G/\V\to\G/\U$ is full.  This is why the atomic topology was defined the way it was in Example \ref{atomic 2-topology} and thus why the definition of a 2-sieve (i.e. Definition \ref{2-sieve defn} dropped Street's requirement of ``full."  See Remark \ref{Remark on Street}.
\end{remark}

\begin{lemma} \label{full when sheaf}  If $F\colon \mathfrak S(\G)^{op}\to\mathfrak{Cat}$ is a 2-sheaf for the atomic topology, then $\eta$ of Lemma \ref{unit of 2-equivalence} is full.
\end{lemma}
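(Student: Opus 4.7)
The plan is to mimic the proof of Lemma \ref{surjective when sheaf}, now extending a \emph{modification} along the covering sieve rather than a 2-natural transformation. Given a $\U$-fixed arrow $[\V,m]\colon [\U,X]\to [\U,Y]$ of the colimit, filteredness of $\mathbf L(\G)$ and the relation defining the colimit allow one to replace $\V$ by a smaller open sub-2-group and $m$ by its restriction, so one may assume outright that $\V\subset\U$, $\dom m = X\cdot I$, and $\cod m = Y\cdot I$ in $F(\G/\V)$, where $I\colon \G/\V\to\G/\U$ is the inclusion-morphism of $\mathfrak S(\G)$.

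The singleton $\{I\colon\G/\V\to\G/\U\}$ generates a covering sieve $S\hookrightarrow\mathbf y(\G/\U)$ for the atomic topology of Example \ref{atomic 2-topology}. Its 1-cells at level $\G/\W'$ are exactly those $A\colon \G/\W'\to\G/\U$ that also restrict to arrows $\G/\W'\to\G/\V$, and analogously for 2-cells. I would then construct a modification $\tilde m\colon X_S\Rrightarrow Y_S$ between the restrictions to $S$ of the transformations $\mathbf y(\G/\U)\Rightarrow F$ represented by $X$ and $Y$, by setting its component at $A\in S(\G/\W')_0$ to be $F(A)(m)\colon X\cdot A\to Y\cdot A$ in $F(\G/\W')$; this has the correct source and target since $I\otimes A=A$ and $F$ is a 2-functor. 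Naturality at a 2-cell $g\colon A\Rightarrow A'$ in the sieve is then the naturality of $F(g)$ applied to $m\in F(\G/\V)_1$, and the modification axiom across a 1-cell $f\colon \G/\W''\to\G/\W'$ of $\mathfrak S(\G)$ is an instance of 2-functoriality of $F$ applied to the composite $A\otimes f$.

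Because the 2-sheaf isomorphism \ref{2-sheaf isomor} is an isomorphism of hom-\emph{categories}, $\tilde m$ extends uniquely along $S\hookrightarrow\mathbf y(\G/\U)$ to a modification between the transformations represented by $X$ and $Y$; by the $\mathbf{Cat}$-enriched Yoneda lemma, such a modification is precisely a morphism $n\colon X\to Y$ in $F(\G/\U)$. Evaluating the extended modification at the sieve element $I\in S(\G/\V)_0$ yields on one hand $F(I)(n)=n\cdot I$ from the Yoneda description, and on the other hand the original value $F(I)(m)=m$ of $\tilde m$. Therefore $n\cdot I=m$, whence $\eta(n)=[\U,n]=[\V,n\cdot I]=[\V,m]$, as required.

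The main obstacle is bookkeeping rather than any deep idea: one must keep straight the dual role of each $A\in\G_0$ as a morphism into both $\G/\U$ and $\G/\V$, verify well-definition of the modification component on the sieve (using the uniqueness of factorization through the identity $I$), and use the crucial fact that the 2-sheaf condition is an isomorphism of hom-\emph{categories} and not merely of hom-sets, so that the existence and uniqueness of an extension of \emph{modifications} along $S\hookrightarrow\mathbf y(\G/\U)$ are available at one stroke.
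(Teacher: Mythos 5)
Your overall strategy is the same as the paper's: regard $X$ and $Y$ as compatible data on the covering sieve $S$ generated by $I\colon\G/\V\to\G/\U$ (after normalizing so that $\V\subset\U$), package $m$ as a modification between them whose component at an element of the sieve factoring through $I$ by $B$ is $m\cdot B$, use the fact that the 2-sheaf condition \ref{2-sheaf isomor} is an isomorphism of hom-\emph{categories} to extend this modification along $S\to\mathbf y(\G/\U)$, and read off the preimage of $[\V,m]$ by evaluating at the identity. All of that matches the paper.

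There is, however, a genuine gap at the well-definition step, which is the crux of the lemma. You dismiss it ``using the uniqueness of factorization through the identity $I$,'' but factorization through $I$ is not unique in $\mathfrak S(\G)$: two elements $B,C\in\G_0$ with $C^{-1}\otimes B\in\U_0$ present the same morphism $A\colon\G/\W\to\G/\U$, hence both factor $A$ through $I\colon\G/\V\to\G/\U$, while they are in general \emph{distinct} morphisms $\G/\W\to\G/\V$ (the $\V_0$-coset relation is finer than the $\U_0$-coset relation). So one must actually prove $m\cdot B=m\cdot C$, and if this step is skipped the proposed component of the modification is simply not defined. The paper handles it exactly as in the proof of Lemma \ref{surjective when sheaf}: writing $D=C^{-1}\otimes B\in\U_0$, the hypothesis that $[\V,m]$ is $\U$-fixed gives $[D^{-1}\V D,\,m\cdot D]=[\V,m]$ in the colimit, and the defining relation of the colimit together with the \emph{faithfulness} of each transition functor $FI$ (Lemma \ref{atomic top lemma}, itself a consequence of the atomic 2-sheaf hypothesis) then forces $m\cdot B=m\cdot C$. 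This is precisely where the sheaf hypothesis and $\U$-fixedness do real work beyond merely supplying the extension; your argument as written omits it.
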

\begin{proof}  This argument follows a similar pattern.  That is, suppose that 
\[ [\V,m]\colon [\V,X] \to [\V,Y]
\]
is a morphism of the colimit, $\U$-fixed by the action of $\G$, and represented by $m\colon X\to Y$ in $F(\G/V)_1$.  The goal is to show that $m$ induces a morphism of compatible data $m\colon X\Rightarrow Y\colon S\Rightarrow F$ where $X$ and $Y$ are viewed as data $S\to F$ as in \ref{obj to data} and \ref{arrow to data} above.  The 2-dimensional aspect of the 2-sheaf condition \ref{2-sheaf isomor} will then guarantee an extension of $m$ between the extensions of $X$ and $Y$, yielding the element of $F(\G/\U)_1$ mapping to $[\V,m]$ under $\eta$.  So, the definition of the morphism $m\colon  X\Rrightarrow Y$ at say $A\colon \G/\W\to \G/\U$ factoring through $I\colon \G/\V\to \G/\U$ by say $B\colon \G/\W\to\G/\V$ is given on components by
\[ m_{\G/\W, B}:= m\cdot B\colon X\cdot B\to Y\cdot B
\]
in $F(\G\W)_1$.  Naturality in $B$ follows because $F(h)$ is natural for each 2-cell $h\colon B\Rightarrow B'$ of $\mathfrak S(\G)$.  Additionally, the collection of all $m_{\G/\mathscr W}$ will be a modification as in \ref{modification condition} by the axioms of the action of $\G$.  However, again the assignment needs to be shown to be well-defined, that is, independent of $B$ factoring $A$ through $I$.  But this part of the proof follows the pattern of the well-definition argument in the surjective-on-objects proof in Lemma \ref{surjective when sheaf} above using ``faithful" instead of ``injective on objects" from Lemma \ref{atomic top lemma}.  \end{proof}

\begin{theo} \label{MAIN THEOREM} With $\mathfrak S(\G)$ as in Construction \ref{underlying 2-cat construction}, there is a 2-equivalence
\[ \mathfrak{Sh}(\mathfrak S(\G),J_{at})\simeq \mathfrak B\G
\]
in the sense of Definition \ref{equiv of 2-cats} for any topological 2-group $\G$.
\end{theo}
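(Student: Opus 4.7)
The plan is to exhibit the asserted 2-equivalence by showing that (appropriate restrictions of) the 2-functors $\Phi$ and $\Psi$ from \ref{Big Phi} and \ref{Big Psi} are mutually pseudo-inverse. Two 2-natural isomorphisms must be produced: the counit $1\cong \Psi\Phi$ on $\mathfrak B\G$, and the unit $1\cong \Phi\Psi$ on $\mathfrak{Sh}(\mathfrak S(\G),J_{at})$.

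The counit side is already in hand: the proposition immediately preceding the theorem provides a 2-natural isomorphism $1\cong \Psi\Phi$ on $\mathfrak B\G$, obtained by combining Propositions \ref{fixed point iso prop} and \ref{counit iso} with the horizontal composition formula \ref{horiz comp in cat}. This can be invoked directly with no modification.

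For the unit side, I would assemble Lemmas \ref{atomic top lemma}, \ref{surjective when sheaf}, and \ref{full when sheaf}. The 2-natural transformation $\eta\colon 1\Rightarrow \Phi\Psi$ of Lemma \ref{unit of 2-equivalence} has components which, on any 2-sheaf $F$, are (i) injective on objects and faithful by inspection of the colimit equivalence relations combined with Lemma \ref{atomic top lemma} (which guarantees that each restriction functor $FI$ is injective on objects and faithful); (ii) surjective on objects by Lemma \ref{surjective when sheaf}; and (iii) full by Lemma \ref{full when sheaf}. A bijective-on-objects, fully faithful functor is an isomorphism of categories, so each $\eta_F$ is an isomorphism, hence a 2-natural isomorphism of 2-functors. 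Combining with the counit isomorphism then yields the 2-equivalence in the sense of Definition \ref{equiv of 2-cats}.

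Before this assembly, one preliminary check is needed: that $\Phi$ factors through $\mathfrak{Sh}(\mathfrak S(\G),J_{at})$, i.e.\ that $\Phi(\X)=\mathfrak B\G(-,\X)$ is a 2-sheaf for the atomic topology for every $\X\in\mathfrak B\G$. Given a morphism $A\colon \G/\V\to\G/\U$ generating a covering sieve on $\G/\U$, one verifies that any 2-natural transformation from the generated sieve into $\mathfrak B\G(-,\X)$ extends uniquely along the representable, using $\G$-equivariance together with the fact that $A$ is surjective on cosets at both the object and arrow level, in direct analogy with the 1-dimensional argument recalled in \S 3.2. This sheaf-verification for representables is the main obstacle I anticipate, since the preceding development is organized around producing the unit and counit rather than inspecting representables directly; once it is dispatched, the remainder is a direct invocation of the proposition above together with the three lemmas controlling $\eta_F$.
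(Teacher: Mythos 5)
Your proposal is correct and follows essentially the same route as the paper: the counit is taken from the proposition immediately preceding the theorem, and the unit is assembled from Lemmas \ref{atomic top lemma}, \ref{surjective when sheaf}, and \ref{full when sheaf} via the observation that a bijective-on-objects, fully faithful functor is an isomorphism of categories. The preliminary check you flag --- that each representable $\mathfrak B\G(-,\X)$ is a 2-sheaf for the atomic topology, so that $\Phi$ actually restricts to $\mathfrak{Sh}(\mathfrak S(\G),J_{at})$ --- is genuinely needed and is left implicit in the paper, so your attention to it is a point of added care rather than a divergence in method.
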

\begin{proof}  All that remains is to check that the isomorphism $\Phi\Psi(F)\cong F$ of Propositions \ref{surjective when sheaf} and \ref{full when sheaf} is 2-natural in $F$ and $\X$ respectively.  But this follows from the definitions of the 2-functors.  \end{proof}

\section{Prospectus}

There are two ways in which Theorem \ref{MAIN THEOREM} is only a preliminary result.  The first is that it holds only for strict topological 2-groups.  Many interesting examples of 2-groups are the so-called ``coherent" 2-groups of \cite{Baez2Groups}.  The second is that the result depends upon very strict definitions coming from enriched category theory.  Insofar as this is the case, the result is not very ``bicategorical" and seems somewhat out of tune with the spirit of 2-dimensional category theory. 

Trying to address either one of these two deficiencies in Theorem \ref{MAIN THEOREM} seems to help with the other, however.  For in switching to a coherent topological 2-group $\G$, the weak monoidal structure on $\G$ will make the 2-orbit category $\mathfrak S(\G)$ into a bicategory since the operations are defined in terms of the monoidal structure.  This seems to necessitate a move to a more relaxed notion of a topology, namely, a ``bicovering" or a ``bitopology" rephrasing the notion of \S C2 in \cite{Elephant2} in an ``up-to-isomorphism" sense as detailed on the nLab \cite{nlab:2-sheaf}.  Of course this necessitates a move to so-called ``stacks" or ``bisheaves" instead of the 2-sheaves of \cite{StreetSheafThy} used in this paper.

On the other hand, certainly a 2-category is already a strict bicategory and there is nothing \emph{a priori} stopping a move to bicoverings and bisheaves on $\mathfrak S(\G)$ even in the case that $\G$ remains a strict 2-group.  The point of Lemma \ref{sheaf atomic top characterization} is that it is a well-definition criterion.  And this problem reappears in Propositions \ref{surjective when sheaf} and \ref{full when sheaf} whose proofs show that the preimage objects and arrows are constructed through defining coherence data whose well-definition is guaranteed by the facts that the $F\colon \mathfrak S(\G)^{op}\to\mathfrak{Cat}$ is a 2-sheaf and that $\G$ is a strict monoidal category.  Importantly, it seems that these well-definition problems are in fact posed by built-in strictness in the notions of sheaf and 2-sheaf.  It is not yet clear that stacks or bisheaves would pose the same, or at least same kind of, problem.  Insofar as this is the case, it might thus be possible to relax the monoidal structure on $\G$ as it might not be needed for this purpose.

As a final note, it appears that the real heart of the constructions in the classical result and the main result of the present paper are the colimit characterizations giving the object assignments of the functor $\mathbf BG\to [\mathbf S(G)^{op},\mathbf{Set}]$ and of the 2-functor $[\mathfrak S(\G)^{op},\mathfrak{Cat}] \to\mathfrak B\G$.  In particular these are so-called ``filtered" colimits which have a nice description in terms of germ-like equivalence classes under a tractable relation.  These characterizations are vital in showing that each colimit admits a continuous action of the group or 2-group, as the case may be.  Now, the colimit in the 2-case is actually just the 1-colimit of a category-valued functor, but it happens to have nice properties at the 2-cell level so that it gives a boosted up strict 2-colimit as well.  In switching to the bicategorical class of results for coherent 2-groups, it is expected that such boosted up 1-colimits will no longer be appropriate.  Rather it is expected that pseudo- or bi-colimits will have to be used.  And an explicit characterization of 2- or bi-filtered pseudo- and bi-colimits will be needed to canonically construct a well-defined continuous action.  For this purpose, however, there is the work of Descotte, Dubuc, and Szyld in \cite{DDS} and the work of the author's thesis \cite{LambertThesis}, which computes all weighted pseudo- and bi-colimits of categories and shows that those that are 2-filtered are formed by the process of the right calculus of fractions.

\bibliography{research}
\bibliographystyle{alpha}

\end{document}